\providecommand{\keywords}[1]{\textbf{\textit{Keywords: }} #1}
\providecommand{\classification}[1]{\textit{2020 Mathematics Subject Classification: } #1}
\def\BState{\State\hskip-\ALG@thistlm}
\newtheoremstyle{Definition}
  {0.2cm}                   
  {0.2cm}                   
  {\normalfont}           
  {}                      
  {\bfseries}  						
  {.}                     
  { }              				
  {}
\newtheoremstyle{Theorem}
  {0.2cm}                   
  {0.2cm}                   
  {\itshape}           		
  {}                      
  {\bfseries}  						
  {.}                     
  { }              				
  {}
\theoremstyle{Theorem}
	\newtheorem{cor}{Corollary}
	\newtheorem{prop}{Proposition}
	\newtheorem{thm}{Theorem}
	\newtheorem{ass}{Assumption}
\theoremstyle{Definition}
\newcommand{\Levy}{L\'{e}vy }
\newcommand{\upd}{\mathrm{d}}
\DeclareMathOperator*{\argmax}{arg\,max}
\DeclareMathOperator*{\argmin}{arg\,min}
\begin{document}




\title{Parametric Estimation of Tempered Stable Laws} 
\author[a]{Till Massing\thanks{Faculty of Economics, University of
Duisburg-Essen, Universit{\"{a}}tsstr.~12, 45117 Essen, Germany.\\E-Mail:
till.massing@uni-due.de}}

\maketitle

\begin{abstract}
Tempered stable distributions are frequently used in financial applications (e.g., for option pricing) in which the tails of stable distributions would be too heavy. Given the non-explicit form of the probability density function, estimation relies on numerical algorithms which typically are time-consuming. We compare several parametric estimation methods such as the maximum likelihood method and different generalized method of moment approaches. We study large sample properties and derive consistency, asymptotic normality, and asymptotic efficiency results for our estimators. Additionally, we conduct simulation studies to analyze finite sample properties measured by the empirical bias, precision, and asymptotic confidence interval coverage rates and compare computational costs. We cover relevant subclasses of tempered stable distributions such as the classical tempered stable distribution and the tempered stable subordinator. Moreover, we discuss the normal tempered stable distribution which arises by subordinating a Brownian motion with a tempered stable subordinator. Our financial applications to log returns of asset indices and to energy spot prices illustrate the benefits of tempered stable models.
\end{abstract}

\keywords{Tempered stable distributions; Parametric estimation; Maximum likelihood method; Generalized Method of Moments}\\
\classification{60E07;60G51;62F12;62P20}
\microtypesetup{activate=true}

\section{Introduction}\label{sec:intro}

We discuss parametric estimation methods for some well-known subclasses of tempered stable distributions. Estimation relies heavily on numerical methods as the probability density function is not given in closed form. This paper aims to compare available estimation methods both from an analytical as well as from a practical point of view. 

Tempered stable distributions are relevant from both a theoretical perspective and in the context of financial applications. Since tempered stable distributions are infinitely divisible, they can be used as the underlying marginal distribution for tempered stable \Levy processes. They arise by tempering the \Levy measure of stable distributions with a suitable tempering function. Tempered stable distributions were introduced in \cite{Koponen1995}, where the associated \Levy process was called smoothly truncated \Levy flight, which itself is a generalization of Tweedie distributions \cite[]{Tweedie1984}. Since then, tempered stable distributions have been generalized in several directions by mainly generalizing the class of tempering functions. \cite{Rosinski2007} and \cite{Rosinski2010} present a general framework for tempered stable distributions which contain the parametric subclasses to be considered in this paper. Further developments are surveyed in \cite{Grabchak2016book}.

The three subclasses we consider in this paper are the tempered stable subordinator (a one-sided distribution with finite variation), the classical tempered stable distribution (with the classical exponential tempering), and the normal tempered stable distribution (which is a normal-variance mixture with a tempered stable subordinator). The CGMY distribution \cite[]{CGMY2002} is a well-known special case of the classical tempered stable distribution which was introduced to model log-returns of stock prices. Tempered stable distributions have frequently been used for financial applications \cite[]{Kim2008,rachev2011financial,Fallahgoul2019}. Furthermore, \cite{Kim2008} and \cite{Kuchler2014} propose using tempered stable distributions for option pricing because the tails of these distributions are not too heavy for modeling financial returns (contrary to stable distributions). Besides financial applications, tempered stable distributions have been used for many other domains, for example for modeling cell generation \cite[]{Palmer2008}, internet traffic \cite[]{Terdik2008}, and solar-wind velocity \cite[]{Bruno2004}.

 Estimation methods for generalized tempered stable distributions are still an active area of research. We compare various well-established estimation methods in the literature. The first is the traditional maximum likelihood (ML) method, which works by numerical optimization and Fourier inversion, see \cite{Kim2008,rachev2011financial}. \cite{Grabchak2016} proves strong consistency of the maximum likelihood estimator (MLE). \cite{Kuchler2013} propose moment methods which are easier and faster than the MLE. We also use the generalized method of moments estimator by \cite{Feuerverger1981} that is based on empirical characteristic functions and the generalized method of moments on a continuum of moment conditions by \cite{Carrasco2017}. The latter method already turned out to be useful in estimating stable distributions, see \cite{Garcia2011stable}. Further available methods include the method of simulated quantiles \cite[]{Dominicy2013,Fallahgoul2019quantile}, and non- or semiparametric methods \cite[]{Belomestny2015,Figueroa2022}.

This paper contributes to the literature in three ways. First, we derive asymptotic theory for the ML method and the generalized method of moments for the three classes of tempered stable distributions. More precisely, we prove asymptotic efficiency and asymptotic normality of the estimators by verifying a set of sufficient conditions. Second, we compare finite sample properties of the estimators in a Monte Carlo study. Third, we illustrate that tempered stable distributions are more suitable in financial applications than stable distributions because the tails of the latter are too heavy. For this, we study log-returns of three financial time series namely the S\&P 500, the German DAX, and the German EEX electricity spot prices.

The remainder of this paper is organized as follows. Section \ref{sec:distributions} presents formal definitions and properties of tempered stable distributions and some of their important subclasses. Section \ref{sec:estimators} discusses the estimation strategies and states their general asymptotic results. Section \ref{sec:results} contains our theoretical results. All proofs are relegated to the appendix. In Section \ref{sec:MCstudy} we conduct a simulation study to analyze finite sample properties. We discuss financial applications in Section \ref{sec:application}. Section \ref{sec:conclusion} concludes.

\section{Tempered stable distributions}\label{sec:distributions}

To establish notation, we describe some general properties of tempered stable distributions and the considered special cases in this paper. \citet[Section 8]{Sato1999levy} compares different versions of the L\'evy-Khintchine representation to uniquely describe distributions of infinitely divisible random variable $X$. In this paper, we make use of two of these versions. In our setting, it depends on the index of stability parameter $\alpha\in(0,2)$ which version is being used. The first version describes infinitely distributions by the characteristic triple $(\mu,\sigma^2,\Pi)_1$, such that
\begin{equation}\label{eq:khintchine}
\mathbb{E}\left[\mathrm{e}^{\mathrm{i} t X}\right]=\exp\left(\mathrm{i} t\mu-\frac{1}{2} \sigma^2 t^2+\int_{\mathbb{R}}\left(\mathrm{e}^{\mathrm{i} tr}-1-\mathrm{i} tr\right)\Pi(\mathrm{d}r)\right),
\end{equation}
where $\mu\in\mathbb{R}$, $\sigma\ge0$, and $\Pi$ is a measure on $\mathbb{R}$ called \emph{L\'evy measure} satisfying $\Pi(\{0\})=0$ and
\[\int_{\mathbb{R}}(|r|^2\wedge |r|)\Pi(\mathrm{d}r)<\infty,\]
which ensures that $\Pi$ is $\sigma$-finite. We use this characterization in the case of $\alpha\in(1,2)$.

The second decomposition is characterized by the triple $(\mu_0,\sigma^2,\Pi)_0$, such that
\begin{equation}\label{eq:Laplaceexponent}
\mathbb{E}\left[\mathrm{e}^{\mathrm{i} t X}\right]=\exp\left(\mathrm{i} t\mu_0-\frac{1}{2} \sigma^2 t^2+\int_0^{\infty}\left(\mathrm{e}^{\mathrm{i} tr}-1\right)\Pi(\mathrm{d}r)\right).
\end{equation}
$\mu_0$ is called drift and $\Pi$ is the L\'evy measure, as long as $\mu_0\ge0$ and $\int_{0}^{\infty}(r\wedge 1)\Pi(\mathrm{d}r)<\infty$. We use this characterization for $\alpha\in(0,1)$. We omit to give a parametrization which covers the case $\alpha=1$ and refer to \citet[Section 8]{Sato1999levy}

A \emph{subordinator} is a one-dimensional, (a.s.)~non-decreasing L\'{e}vy process. For subordinators we use the decomposition $(\mu_0,\sigma^2,\Pi)_0$, where $\Pi((-\infty,0])=0$. Although formally there is a difference between stochastic processes and distributions, we will for simplicity refer to the distribution of this subordinator process at time 1 as a subordinator as well. Throughout this paper, when we talk about L\'evy processes we are mainly interested in the characterizing distribution at time 1.

Important special cases are so-called stable (or $\alpha$-stable) \Levy processes (see \cite{Sato1999levy} or \cite{Nolan2020}). One-dimensional stable processes are characterized by the \Levy measure
\begin{equation}
\label{eq:LevyStable1}
M(\upd r) = \left(\frac{\delta_+}{r^{1+\alpha}}\mathds{1}_{(0,\infty)}(r)+\frac{\delta_-}{|r|^{1+\alpha}}\mathds{1}_{(-\infty,0)}(r)\right)\upd r,
\end{equation}
with $\alpha\in(0,2)$, where $\alpha$ is called the index of stability, and $\delta_+,\delta_-\ge0$ s.t.~$(\delta_+,\delta_-)\neq(0,0)$. We call $\rho=\frac{\delta_+-\delta_-}{\delta_++\delta_-}\in[-1,1]$ the skewness parameter. 

Tempered stable distributions arise by tempering the \Levy measure of a stable distribution by a tempering function. The \Levy measure is
\begin{equation}
\label{eq:LevyTS}
Q(\upd r) =\left(\frac{\delta_+q(r,+1)}{r^{1+\alpha}}\mathds{1}_{(0,\infty)}(r)+\frac{\delta_-q(|r|,-1)}{|r|^{1+\alpha}}\mathds{1}_{(-\infty,0)}(r)\right)\upd r
\end{equation}
where $q:(0,\infty)\times \{\pm1\}\rightarrow (0,\infty)$ is a Borel function. \cite{Rosinski2007} considered the case where $q(\cdot,u)$ is completely monotone with $\lim_{r\to\infty}q(r,u)=0$ for each $u\in \{\pm1\}$, i.e., $(-1)^n\frac{\partial^n}{\partial r^n}q(r,u)>0$ for all $r>0,u\in \{\pm1\}, n\in\mathbb{N}_0$. It is called a proper tempered stable distribution if, in addition, $\lim_{r\downarrow0}q(r,u)=1$ for $u\in \{\pm1\}$. Proper tempered stable distributions follow the initial motivation by modifying the tails of stable distributions to make them lighter. \cite{Rosinski2010} generalize tempered stable distributions by relaxing the complete monotonicity assumption and allowing $q$ to only converge in a certain sense to some non-negative function $g$ (see \cite{Rosinski2010} for details). A number of parametric forms for $q$ have been proposed in the literature, see \cite{rachev2011financial} for some examples. In this paper, we mainly focus on the exponential (or classical) tempering function where we apply an exponentially decreasing function. See the next subsections for details. For both stable and tempered stable distributions, the Gaussian term $\sigma^2$ is zero.

\subsection{Tempered stable subordinator}\label{subsec:TSS}

The first special case we discuss is the tempered stable subordinator (TSS). It is constructed from the stable subordinator which is a non-negative, increasing \Levy process with $\alpha$-stable marginals. In this case, the stability parameter $\alpha$ needs to be in $(0,1)$. Considering the characterization \eqref{eq:Laplaceexponent} with parametrization $(\mu_0,\sigma^2,\Pi)_0$, the \Levy measure of a stable subordinator is 
\begin{equation}\label{eq:LevySS}
\frac{\delta}{r^{1+\alpha}}\mathds{1}_{(0,\infty)}(r)\upd r,
\end{equation}
where $\delta>0$ is a scale parameter, and the drift $\mu_0$ and $\sigma^2$ are zero. For exponential tempering, the \Levy measure of the TSS distribution is given by
\begin{equation}\label{eq:LevyTSS}
Q_{TSS}(\upd r)=\frac{\mathrm{e}^{-\lambda r}\delta}{r^{1+\alpha}}\mathds{1}_{(0,\infty)}(r)\upd r,
\end{equation}
where $\lambda>0$ is the tempering parameter.

Let $Y\sim TSS(\alpha,\delta,\lambda)$ be a TSS distributed random variable defined on $\mathbb{R}_+$. Using \eqref{eq:LevyTSS} it is possible to derive the characteristic function of the TSS distribution. See \citet[][Lemma 2.5]{Kuchler2013}, for a proof of
\begin{equation}
\label{eq:charTSS}
\varphi_{TSS}(t;\theta):=\mathbb{E}_{\theta}\left[\mathrm{e}^{\mathrm{i}tY}\right]= \exp\left(\delta\Gamma(-\alpha)\left((\lambda-\mathrm{i}t)^{\alpha}-\lambda^{\alpha}\right)\right),
\end{equation}
with parameter vector $\theta=(\alpha,\delta,\lambda)$, where the power stems from the main branch of the complex logarithm. $\mathbb{E}_{\theta}$ is the expectation operator w.r.t.~the data generating process indexed by $\theta$.

The probability density function of tempered stable distributions is generally not available in closed form. For the density of the TSS distribution we can make use of the identity
\begin{equation}
\label{eq:dTSS}
f_{TSS}(y;\theta)=\mathrm{e}^{-\lambda y-\lambda^{\alpha}\delta\Gamma(-\alpha)}f_{S(\alpha,\delta)}(y),
\end{equation}
see \citet[][eq.~(2.6)]{Kawai2011}. $S(\alpha,\delta)$ denotes the distribution of the $\alpha$-stable subordinator with scale parameter $\delta$ and $f_{S(\alpha,\delta)}(y)$ denotes its density. Both $f_{TSS}(y;\theta)$ and $f_{S(\alpha,\delta)}(y)$ are only defined on $\mathbb{R}_+$. $f_{S(\alpha,\delta)}(y)$ is (except for a few special cases) not available in closed form. However, many software packages (like the \texttt{stabledist} or \texttt{Tweedie} packages in R) have fast computation routines based on series or integral representations. Combining such series representation with \eqref{eq:dTSS}, we obtain a series representation for the TSS distribution
\begin{equation}
\label{eq:dTSSseries}
f_{TSS}(y;\theta)=\mathrm{e}^{-\lambda y-\lambda^{\alpha}\delta\Gamma(-\alpha)}\frac{-1}{\pi}\sum_{k=1}^{\infty}\frac{(-1)^k}{k!}\Gamma(1+\alpha k)\Gamma(1-\alpha)^k\left(\frac{\delta}{\alpha}\right)^ky^{-(1+\alpha k)}\sin(\alpha\pi k),
\end{equation}
see \cite{Bergstrom1952,Nolan2020}.

The estimation method in Section \ref{subsec:GCM} makes use of matching theoretical with empirical cumulants. Therefore, we state the cumulant generating function of the TSS distribution 
\begin{equation}\label{eq:cgfTSS}
\psi_{TTS}(t;\theta)= \delta\Gamma(-\alpha)\left((\lambda-t)^{\alpha}-\lambda^{\alpha}\right),
\end{equation}
for $t\le\lambda$, derived in \cite{Kuchler2013}. Thus, the $m$-th order cumulants $\kappa_m=\left.\frac{\upd^m}{\upd t^m}\psi(t)\right|_{t=0}$ are given by
\begin{equation}\label{eq:cumsTSS}
\kappa_m=\Gamma(m-\alpha)\frac{\delta}{\lambda^{m-\alpha}},\ \ \ m\in\mathbb{N}.
\end{equation}

Simulation, which we need in the Monte Carlo study, of TSS distributed random variates is straightforward by an acceptance-rejection algorithm, i.e., we first generate $U\sim\mathcal{U}(0,1)$ and $V\sim S(\alpha,\delta)$. If $U\le \mathrm{e}^{-\lambda V}$ we set $Y:=V$, otherwise we repeat the first step, see \cite{Kawai2011}. See \cite{Hofert2011} for the more efficient double rejection method. For the generation of stable random numbers see, e.g., \cite{Nolan2020}.

\subsection{Centered and totally positively skewed tempered stable distribution}\label{subsec:PTS}

Closely related to the TSS distribution is the centered and totally positively skewed tempered stable distribution $TS^{\prime}(\alpha,\delta,\lambda)$ which is defined by its characteristic function
\begin{equation}
\label{eq:charPTS}
\exp\left(\int_{\mathbb{R}_+}\left(\mathrm{e}^{\mathrm{i} tr}-1-\mathrm{i} tr\right)\frac{\mathrm{e}^{-\lambda r}\delta}{r^{1+\alpha}}\upd r\right)=\exp\left(\delta\Gamma(-\alpha)\left((\lambda-\mathrm{i}t)^{\alpha}-\lambda^{\alpha}+\mathrm{i}t\alpha\lambda^{\alpha-1}\right)\right),
\end{equation}
for $\alpha\in(0,2)$, where the power stems from the main branch of the complex logarithm. Note that here we use the characterization \eqref{eq:khintchine} with parametrization $(\mu,\sigma^2,\Pi)_1$ instead of \eqref{eq:Laplaceexponent} although the \Levy measure $\frac{\mathrm{e}^{-\lambda r}\delta}{r^{1+\alpha}}\upd r$ is the same. For $\alpha\in(0,1)$, we have the relation that if $Y\sim TSS(\alpha,\delta,\lambda)$, then $Y-\delta\Gamma(1-\alpha)\lambda^{\alpha-1}\sim TS^{\prime}(\alpha,\delta,\lambda)$. In particular, 
\begin{equation}
\label{eq:denscomposition}
f_{TS^{\prime}}(y;\alpha,\delta,\lambda)=f_{TSS}(y-\Gamma(1-\alpha)\delta\lambda^{\alpha-1};\alpha,\delta,\lambda).
\end{equation}
For $\alpha\in(0,1)\cup(1,2)$, we additionally have
\begin{equation}
\label{eq:denscomposition2}
f_{TS^{\prime}}(y;\alpha,\delta,\lambda)=\mathrm{e}^{-\lambda y-\lambda^{\alpha}\delta(\alpha+1)\Gamma(-\alpha)}f_{S(\alpha,\delta)}(y-\Gamma(1-\alpha)\delta\lambda^{\alpha-1}),
\end{equation}
where $S(\alpha,\delta)$ is the totally positively skewed stable distribution with \Levy measure $\delta r^{-\alpha-1}\mathds{1}_{(0,\infty)}(r)\upd r$ and characteristic function
\begin{equation}
\label{eq:charPS}
\exp\left(-\delta\frac{\Gamma(1-\alpha)}{\alpha}\cos(\pi\alpha/2)|t|^{\alpha}\left(1-\mathrm{i}\tan(\pi\alpha/2)\mathrm{sgn}(t)\right)\right),
\end{equation}
which is the same as the stable subordinator if $\alpha\in(0,1)$.
Note that for $\alpha\in(0,1)$ the TS' distribution is defined on $(\Gamma(1-\alpha)\delta\lambda^{\alpha-1},\infty)$ instead of $\mathbb{R}_+$ as for the TSS distribution. This in fact makes classical asymptotic theory for the MLE infeasible and we only use the distribution as a tool for proving results about CTS distributions to be defined in Subsection \ref{subsec:CTS}. For $\alpha\ge1$, the TS' distribution is defined on $\mathbb{R}$.

Simulation of totally positively skewed tempered stable random variables is more involved than for the subordinator as the simple acceptance-rejection does not work for $\alpha\in(1,2)$. \cite{Kawai2011} present several remedies, e.g., a truncated series representation by \cite{Rosinski2001}. We opt for the simulation approach of \cite{Baeumer2010}, i.e., using an approximate acceptance-rejection algorithm which works as follows: first fix a number $c>0$. Second, simulate $U\sim\mathcal{U}(0,1)$ and $V\sim S(\alpha,\delta)$. If $U\le \mathrm{e}^{-\lambda (V+c)}$ we set $Y:=V -\Gamma(1-\alpha) \delta\lambda^{\alpha-1}$, otherwise we return to the second step. The algorithm is not exact, i.e., $Y\nsim TS^{\prime}(\alpha,\delta,\lambda)$. The number $c$ controls the degree of approximation and also the acceptance rate. For too small $c$, the approximation might not be sufficient. For large $c$, the approximation improves; yet, the acceptance probability decreases and therefore the runtime elongates.

\subsection{Classical tempered stable distribution}\label{subsec:CTS}

Next, we discuss one-dimensional classical tempered stable (CTS) distributions. They are defined by their \Levy measure 
\begin{equation}\label{eq:LevyCTS}
Q_{CTS}(\upd r)=\left(\frac{\mathrm{e}^{-\lambda_+r}\delta_+}{r^{1+\alpha}}\mathds{1}_{(0,\infty)}(r)+\frac{\mathrm{e}^{-\lambda_-|r|}\delta_-}{|r|^{1+\alpha}}\mathds{1}_{(-\infty,0)}(r)\right)\upd r
\end{equation}
in representation \eqref{eq:khintchine} with parametrization $(\mu,\sigma^2,\Pi)_1$.
$\alpha\in(0,2)$ is the stability parameter, $\delta_+,\delta_->0$ are scaling parameters, $\lambda_+,\lambda_->0$ are tempering parameters and $\mu$ is a location parameter. The indices $+$ and $-$ refer to the positive and negative parts of the distribution (centered around $\mu$). We collect all parameters in the vector $\theta=(\alpha,\delta_+,\delta_-,\lambda_+,\lambda_-,\mu)$. Note that for the CTS distribution the parameter vector $\theta$ is different than for the TSS distribution.

Let $X\sim CTS(\alpha,\delta_+,\delta_-,\lambda_+,\lambda_-,\mu)$, which is a distribution on $\mathbb{R}$. The characteristic function is given by
\begin{align}\label{eq:charCTS}
\varphi_{CTS}(t;\theta):=\mathbb{E}_{\theta}\left[\mathrm{e}^{\mathrm{i}tX}\right]&=\exp\left(\mathrm{i}t\mu+\delta_+\Gamma(-\alpha)\left((\lambda_+-\mathrm{i}t)^{\alpha}-\lambda_+^{\alpha}+\mathrm{i}t\alpha\lambda_+^{\alpha-1}\right)\right.\\
&\ \left. +\delta_-\Gamma(-\alpha)\left((\lambda_-+\mathrm{i}t)^{\alpha}-\lambda_-^{\alpha}-\mathrm{i}t\alpha\lambda_-^{\alpha-1}\right)\right),
\end{align}
for all $\theta\in(0,2)\times(0,\infty)^4\times\mathbb{R}$ such that $\alpha\neq1$. When $\alpha=1$, for $\theta_1=(1,\delta_+,\delta_-,\lambda_+,\lambda_-,\mu)$ the characteristic function of the CTS distribution has the form
\begin{align}\label{eq:charCTS1}
\varphi_{CTS}(t;\theta)&=\exp\left(\mathrm{i}t\mu+\delta_+\left((\lambda_+-\mathrm{i}t)\log(1-\mathrm{i}t/\lambda_+)+\mathrm{i}t\right)\right.\\
&\ \left. +\delta_-\left((\lambda_-+\mathrm{i}t)\log(1+\mathrm{i}t/\lambda_-)-\mathrm{i}t\right)\right).
\end{align}
Note that the characteristic function and the density function are continuous in $\alpha\in(0,2)$.

As for the TSS distribution, the density function of the CTS distribution does not exist in closed form. Crucially, even a simple relationship with a stable density as for the TSS distribution in \eqref{eq:dTSS} is not available. For numerical evaluations it is therefore necessary to rely on algorithms like the fast Fourier transform \cite[FFT, see][]{Brigham1988} applied to the characteristic function \eqref{eq:charCTS}.

As for the TSS distribution, we specify the cumulant generating function
\begin{align}\label{eq:cgfCTS}
\psi_{CTS}(t;\theta)&= t\mu+\delta_+\Gamma(-\alpha)\left((\lambda_+-t)^{\alpha}-\lambda_+^{\alpha}+t\alpha\lambda_+^{\alpha-1}\right)\\
&\  +\delta_-\Gamma(-\alpha)\left((\lambda_-+t)^{\alpha}-\lambda_-^{\alpha}-t\alpha\lambda_-^{\alpha-1}\right),
\end{align}
for $t\in[-\lambda_-,\lambda_+]$. We use theoretical cumulants for cumulant matching below. The $m$-th order cumulants can be derived from \eqref{eq:cgfCTS} and take the form
\begin{equation}\label{eq:cumsCTS}
\kappa_m=\Gamma(m-\alpha)\frac{\delta_+}{\lambda_+^{m-\alpha}}+(-1)^m\Gamma(m-\alpha)\frac{\delta_-}{\lambda_-^{m-\alpha}},
\end{equation}
for $m\ge 2$ and $\kappa_1=\mu$.


CTS distributed random variables can be constructed from totally positively skewed tempered stable random variables in the following way. Let $Y_+\sim TS^{\prime}(\alpha,\delta_+,\lambda_+)$ and $Y_-\sim TS^{\prime}(\alpha,\delta_-,\lambda_-)$ be independent and $\mu\in\mathbb{R}$. Then
\begin{equation}
\label{eq:convolutionCTS}
X:= Y_+ - Y_- +\mu \sim CTS(\alpha,\delta_+,\delta_-,\lambda_+,\lambda_-,\mu).
\end{equation}

\subsection{Normal tempered stable distribution}\label{subsec:NTS}
Another model that is often used in financial applications is the normal tempered stable (NTS) distribution. It is constructed as a classical normal variance mixture, see \cite{barndorff2001normal}. For this, let $Y\sim TSS(\alpha,\delta,\lambda)$, with $(\alpha,\delta,\lambda)\in(0,1)\times(0,\infty)^2$. Let $B\sim N(0,1)$ be independent of $Y$ and $\rho,\mu\in\mathbb{R}$.
Set
\begin{equation}
\label{eq:NTSsubordination}
Z=\sqrt{Y}B+\beta Y+\mu.
\end{equation}
Then, $Z$ is $NTS(\theta)$ distributed, where for this case $\theta=(\alpha,\beta,\delta,\lambda,\mu)$.
We can also obtain the NTS distribution by tempering a stable distribution. The corresponding tempering function can be found in \citet[][Table 3.4]{rachev2011financial}. Note that the tempering function is not completely monotone but it is in the class of generalized tempered stable distributions of \cite{Rosinski2010}.

For our parametrization, the characteristic function now takes the form
\begin{equation}
\label{eq:charNTS}
\varphi_{NTS}(t;\theta)=\mathbb{E}_{\theta}\left[\mathrm{e}^{\mathrm{i}tZ}\right]= \exp\left(\mathrm{i}t\mu+\delta\Gamma(-\alpha)\left((\lambda-\mathrm{i}t\beta+t^2/2)^{\alpha}-\lambda^{\alpha}\right)\right),
\end{equation}
where the power stems from the main branch of the complex logarithm. As for the CTS distribution, the density function is not available in closed form and numerical computation relies on numerical methods such as FFT. 

In this case the cumulants do not have an easy pattern as for the other examples and so we omit them. We also do not propose a cumulant matching estimation method here.

Simulation of NTS distributed random variables is easy given independent TSS and standard normal random variables by invoking \eqref{eq:NTSsubordination}.

\section{Estimation Methods}\label{sec:estimators}
This section discusses some parametric estimation strategies available in the literature. We apply these to the tempered stable distributions considered above and derive asymptotic efficiency and normality in the next section. In this section, we briefly present the methods and some known general asymptotic results. Throughout this section let $X$ be a random variable following one of the tempered stable distributions of Section \ref{sec:distributions} and let $f(x;\theta)$ denote its density function, depending on the parameter vector $\theta$. Also, denote by
$\varphi_{\theta}(t)$
its characteristic function. Let $\theta_0$ be the unknown true parameter vector. In this paper, we only consider the case of an i.i.d.~sample $X_1,\ldots,X_n$ with density function $f(x;\theta_0)$.

\subsection{Maximum likelihood estimation}\label{subsec:MLE}

Maximum likelihood estimation is standard in the literature and frequently used, for example in \cite{Kim2008}. We numerically maximize the log-likelihood function
\begin{equation}
\label{eq:loglikelihood}
\ell_n(\theta)=\sum_{j=1}^n\log f(X_j;\theta)
\end{equation}
with respect to $\theta$ to find the MLE 
\begin{equation}
\label{eq:MLE}
\hat{\theta}_{n,ML}=\argmax_{\theta\in\Theta} \ell(\theta).
\end{equation}

As described in the preceding section, the density functions of our distributions are not available in closed form but either via a series representation (TSS) or via the Fourier inversion (CTS and NTS)
\begin{equation}
\label{eq:FourierInv}
f(x;\theta)=\frac{1}{2\pi}\int_{\mathbb{R}}\mathrm{e}^{-\mathrm{i}tx}\varphi_{\theta}(t)\upd t
\end{equation}
based on the characteristic function $\varphi_{\theta}(t)$, which is feasible because \eqref{eq:charCTS} and \eqref{eq:charNTS} are integrable. 
In practice, we use the FFT algorithm to approximate \eqref{eq:FourierInv}.

Among others, \citet[][Theorem 3.3]{Newey1994} (which we here follow) proved the limiting behavior of the maximum likelihood estimator as given in Proposition \ref{prop:MLasymnorm} under the following assumption.
\begin{ass}\label{ass:ML}
\item[(i)] $\hat{\theta}_{n,ML}$ is consistent for $\theta_0$.
\item[(ii)] $\theta_0$ is an interior point of $\Theta$ which is compact.
\item[(iii)] $f(x;\theta)$ is twice continuously differentiable in $\theta$ in a neighborhood $\mathcal{N}$ around $\theta_0$ and the support of $f$ is equal to the entire domain $\mathcal{D}$ and does not depend on $\theta$.
\item[(iv)] $\int_{\mathcal{D}}\sup_{\theta\in\mathcal{N}}\Big|\Big|\frac{\partial f(x;\theta)}{\partial \theta}\Big|\Big|\upd x<\infty$, $\int_{\mathcal{D}}\sup_{\theta\in\mathcal{N}}\Big|\Big|\frac{\partial^2 f(x;\theta)}{\partial \theta\partial\theta^{\prime}}\Big|\Big|\upd x<\infty$, with $\mathcal{N}$ as in (iii).
\item[(v)] $I_{\theta_0}=\left.\mathbb{E}_{\theta_0}\left[\left(\frac{\partial \log f(X;\theta)}{\partial \theta}\right)\left(\frac{\partial\log f(X;\theta)}{\partial \theta}\right)^{\prime}\right]\right|_{\theta=\theta_0}$ is positive definite.
\item[(vi)] $\mathbb{E}_{\theta_0}\left[\sup_{\theta\in\mathcal{N}}\Big|\Big|\frac{\partial^2 \log f(X;\theta)}{\partial \theta\partial\theta^{\prime}}\Big|\Big|\right]<\infty$, with $\mathcal{N}$ as in (iii).
\end{ass}

The norm $||\cdot||$ for vectors is the usual Euclidean norm and for matrices the Frobenius norm (which can be seen as an Euclidean norm for matrices). Some other references for asymptotic normality results under different sets of assumptions are \cite{Cramer1946} or \cite{Le1956}. 

The stated form of (iii) is slightly different than (ii) in \citet[][Theorem 3.3]{Newey1994} who require the density to be positive for all $x\in\mathbb{R}$. We use the relaxed assumption here because by nature subordinators have no positive density for negative $x$. This relaxation, however, is still within the scope of \cite{Newey1994} because their assumption implies the more general requirement of \citet[][Theorem 3.1]{Newey1994} that the objective function for maximization $\frac{1}{n}\sum_{j=1}^n\log f(x_j,n)$ is twice continuously differentiable in a neighborhood $\mathcal{N}$ of $\theta_0$. This also holds for (iii) above. The essential point is that the density is that the support does not depend on $\theta$ which rules out the TS' distribution for $\alpha<1$. 

We remark that we only need to assume the existence of one neighborhood $\mathcal{N}$. It is not necessary that (iii), (iv) and (vi) hold for any neighborhood around $\theta_0$.

\begin{prop}\label{prop:MLasymnorm}
Under Assumption \ref{ass:ML}, $\hat{\theta}_{n,ML}$
is consistent and
\begin{equation}
\label{eq:MLasymnorm}
n^{1/2}(\hat{\theta}_{n,ML}-\theta_0)\stackrel{\mathcal{L}}{\rightarrow}N(0,I_{\theta_0}^{-1}),
\end{equation}
as $n\to\infty$, where $I_{\theta_0}^{-1}$ denotes the inverse of the Fisher information matrix (which exists due to Assumption \ref{ass:ML}(v)).
\end{prop}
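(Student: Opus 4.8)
The plan is to run the textbook mean-value-expansion argument for $M$-estimators, following \citet[Theorem 3.3]{Newey1994}; Assumption \ref{ass:ML} has been arranged precisely so that each step of that argument applies, the only delicate point being the relaxed support condition \ref{ass:ML}(iii). As already noted after the assumption, this relaxation stays within the framework of \cite{Newey1994} because the criterion function $\frac{1}{n}\ell_n(\theta)$ remains twice continuously differentiable on the neighborhood $\mathcal{N}$, so I would first record this remark and then proceed through the standard steps.

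First I would establish the first-order condition: by consistency \ref{ass:ML}(i) and interiority \ref{ass:ML}(ii), with probability tending to one $\hat{\theta}_{n,ML}$ lies in $\mathcal{N}$ and in the interior of $\Theta$, where $\ell_n$ is differentiable by \ref{ass:ML}(iii), so $\frac{\partial\ell_n}{\partial\theta}(\hat{\theta}_{n,ML})=0$ eventually. A row-wise mean value expansion of the score around $\theta_0$ then gives
\[
0=\frac{1}{n}\frac{\partial\ell_n}{\partial\theta}(\theta_0)+\bar{H}_n\,(\hat{\theta}_{n,ML}-\theta_0),
\]
where the $i$-th row of $\bar{H}_n$ is the $i$-th row of $\frac{1}{n}\frac{\partial^2\ell_n}{\partial\theta\partial\theta^{\prime}}$ evaluated at an intermediate point $\bar{\theta}^{(i)}$ on the segment joining $\hat{\theta}_{n,ML}$ and $\theta_0$, so $\bar{\theta}^{(i)}\stackrel{p}{\to}\theta_0$. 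Next I would handle the two terms separately. Differentiating $\int_{\mathcal{D}}f(x;\theta)\,\upd x\equiv 1$ under the integral sign --- legitimate by the first bound in \ref{ass:ML}(iv) --- shows $\mathbb{E}_{\theta_0}[\partial\log f(X;\theta_0)/\partial\theta]=0$ with covariance $I_{\theta_0}$ from \ref{ass:ML}(v), so the Lindeberg--L\'evy CLT for the i.i.d.\ summands yields $n^{-1/2}\frac{\partial\ell_n}{\partial\theta}(\theta_0)\stackrel{\mathcal{L}}{\to}N(0,I_{\theta_0})$. For the Hessian, \ref{ass:ML}(vi) together with the continuity from \ref{ass:ML}(iii) gives, via a uniform law of large numbers, $\sup_{\theta\in\mathcal{N}}\big\|\frac{1}{n}\frac{\partial^2\ell_n}{\partial\theta\partial\theta^{\prime}}(\theta)-H(\theta)\big\|\stackrel{p}{\to}0$ with $H(\theta):=\mathbb{E}_{\theta_0}[\partial^2\log f(X;\theta)/\partial\theta\partial\theta^{\prime}]$ continuous; combined with $\bar{\theta}^{(i)}\stackrel{p}{\to}\theta_0$ this yields $\bar{H}_n\stackrel{p}{\to}H(\theta_0)$, and a second differentiation of $\int_{\mathcal{D}}f(x;\theta)\,\upd x\equiv 1$ --- justified by the second bound in \ref{ass:ML}(iv) --- gives the information identity $H(\theta_0)=-I_{\theta_0}$, which is invertible by \ref{ass:ML}(v).

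Finally I would assemble the pieces: solving the expansion gives $n^{1/2}(\hat{\theta}_{n,ML}-\theta_0)=-\bar{H}_n^{-1}\,n^{-1/2}\frac{\partial\ell_n}{\partial\theta}(\theta_0)$, and the continuous mapping theorem (inversion is continuous at $-I_{\theta_0}$) together with Slutsky's lemma delivers $n^{1/2}(\hat{\theta}_{n,ML}-\theta_0)\stackrel{\mathcal{L}}{\to}I_{\theta_0}^{-1}N(0,I_{\theta_0})=N(0,I_{\theta_0}^{-1})$. I expect the only genuine obstacle --- and the reason \ref{ass:ML}(iv) is imposed at all --- to be the two differentiation-under-the-integral-sign steps (mean-zero score and the information equality); everything else is the standard $M$-estimator machinery. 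Consequently, for the concrete tempered stable families treated in the next section the substantive work will not lie in this proposition but in verifying the domination conditions \ref{ass:ML}(iv) and \ref{ass:ML}(vi) for the specific, only numerically available densities.
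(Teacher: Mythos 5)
Your proposal is correct and is essentially the argument the paper relies on: the paper does not prove this proposition itself but cites Theorem 3.3 of Newey and McFadden (1994), whose proof is exactly the mean-value-expansion/score-CLT/information-identity machinery you lay out, including the observation that the relaxed support condition in Assumption \ref{ass:ML}(iii) still fits their framework. You also correctly identify that the substantive content of the paper lies in verifying conditions (iv)--(vi) for the specific tempered stable families, which is what the appendix proofs of Theorem \ref{thm:ML} actually do.
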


\subsection{Generalized Method of Moments}\label{subsec:GMM}

The generalized method of moments (GMM) by \cite{Hansen1982} is suitable for estimating tempered stable laws. One approach to define moment conditions is to use the theoretical characteristic function $\varphi_{\theta}(t)$ of $X$.
The sample analogue for the realizations $\{X_j\}_{j=1,\ldots,n}$ is
\begin{equation}
\label{eq:empchar}
\hat{\varphi}_n(t)=\frac{1}{n}\sum_{j=1}^n \mathrm{e}^{\mathrm{i}tX_j}.
\end{equation}
We form moment conditions
\begin{equation}
\label{eq:theormom}
\mathbb{E}_{\theta_0}\left[h(t,X_j;\theta)\right]=0
\end{equation}
for all $t\in \mathbb{R}$, where
\begin{equation}
\label{eq:momcond}
h(t,X_j;\theta)=\mathrm{e}^{\mathrm{i}tX_j}-\varphi_{\theta}(t).
\end{equation}
The sample analogue is denoted by
\begin{equation}
\label{eq:empmom}
\hat{h}_n(t;\theta)=\frac{1}{n}\sum_{j=1}^n h(t,X_j;\theta)=\hat{\varphi}_n(t)-\varphi_{\theta}(t).
\end{equation}

We now review some approaches on how to choose a set of $t$'s to obtain appropriate moment conditions. One way is to choose a finite grid $\{t_1,\ldots,t_R\}\subset\mathbb{R}$, where $R$ denotes the grid size. Given the grid, we define a vector-valued function $g(X_j;\theta)=\left(h(t_1,X_j;\theta),\ldots,h(t_R,X_j;\theta)\right)^{\mathrm{T}}$. We then minimize the objective function 
\begin{equation}\label{eq:GMM}
\left(\frac{1}{n}\sum_{j=1}^ng(X_j;\theta)\right)^{\prime}\hat{W}\left(\frac{1}{n}\sum_{j=1}^ng(X_j;\theta)\right)
\end{equation}
in $\theta$, where we choose $\hat{W}=\hat{\Omega}^{-1}$ so that the asymptotic variance is optimal.
\cite{Feuerverger1981} show that the asymptotic variance of the estimator can be made arbitrarily close to the Cram\'er-Rao bound by selecting the grid sufficiently fine. However, as argued by \cite{Carrasco2017}, the grid size $R$ must not be larger than the sample size. Otherwise, the problem becomes ill-posed since the asymptotic variance matrix of the moment conditions becomes singular. \cite{Carrasco2017} generalize the empirical characteristic function GMM approach by introducing an estimator based on a continuum of moment conditions (CGMM). They derive that the asymptotic variance attains the Cram\'er-Rao bound. They solve the singularity issue of the asymptotic variance matrix by applying a suitable regularization. We discuss this approach in more detail below. For the case of the GMM estimator based on a discrete set of moment conditions, we follow \cite{Kharrat2016} in the numerical computations and also use a regularization to make the scheme numerically stable.

Next, we describe the CGMM estimation method of \cite{Carrasco2017}, which is based on \cite{Carrasco2000} and \cite{Carrasco2007}. We start by introducing some notation. Let $\pi$ be a probability density on $\mathbb{R}$ and $L^2(\pi)$ be the Hilbert space of complex-valued functions such that
\begin{equation}
\label{eq:HilbertL}
L^2(\pi)=\left\{f: \mathbb{R}\rightarrow\mathbb{C} \ : \ \int|f(t)|^2\pi(t)\upd t<\infty\right\}.
\end{equation}
The inner product on $L^2(\pi)$ is defined as
\begin{equation}
\label{eq:innerproduct}
\left\langle f,g\right\rangle_{L^2(\pi)}=\int f(t) \overline{g(t)}\pi(t)\upd t
\end{equation}
and the norm on $L^2(\pi)$ as
\begin{equation}
\label{eq:norm}
||g||_{L^2(\pi)}^2=\int |g(t)|^2\pi(t)\upd t.
\end{equation}
Let $K$ be the asymptotic variance-covariance operator associated with the moment functions $h(t,X;\theta)$. $K$ is an integral operator that satisfies
\begin{align}
\label{eq:operatorK}
K:\ L^2(\pi)&\rightarrow L^2(\pi)\\
\ f&\mapsto g,\ \mathrm{where}\ g(t)=\int k(s,t)f(s)\pi(s)\upd s,
\end{align}
where $k(s,t)$ is a kernel given by
\begin{equation}
\label{eq:kernelk}
k(s,t)=\mathbb{E}_{\theta_0}\left[h(s,X;\theta_0)\overline{h(t,X;\theta_0)}\right],
\end{equation}
with $h$ given in \eqref{eq:momcond}. \cite{Carrasco2017} noted that the inverse of $K$ exists only on a dense subset of $L^2(\pi)$. Thus, we use a regularized estimation of the inverse below. The efficient CGMM estimator is given by
\begin{equation}
\label{eq:CGMM}
\hat{\theta}=\argmin_{\theta\in\Theta} \left\langle K^{-1}\hat{h}_n(\cdot;\theta),\hat{h}_n(\cdot;\theta)\right\rangle_{L^2(\pi)}.
\end{equation}

The above CGMM is non-feasible because we need an estimate $\hat K_n$ for $K$. To get a feasible estimator we first need to estimate $k(s,t)$ in \eqref{eq:kernelk} with
\begin{equation}
\label{eq:estkernelk}
\hat k_n(s,t)=\frac{1}{n}\sum_{j=1}^n\left(\mathrm{e}^{\mathrm{i}s^{\prime}X_j}-\hat{\varphi}_n(s)\right)\left(\overline{\mathrm{e}^{\mathrm{i}t^{\prime}X_j}-\hat{\varphi}_n(t)}\right).
\end{equation}
Second, an empirical operator $\hat K_n$ with kernel function $\hat k_n(s,t)$ is defined by
\begin{align}
\label{eq:empoperatorK}
(\hat K_nf)(t)=\int \hat k(s,t)f(s)\pi(s)\upd s.
\end{align}
However, this choice is non-invertible. Therefore, \cite{Carrasco2017} estimate $K^{-1}$ by a Tikhonov regularization with
\begin{equation}
\label{eq:operatorKest}
\hat K_{n,\gamma_n}^{-1}=\left(\hat K_n^2+\gamma_n I\right)^{-1}\hat K_n.
\end{equation}
$\gamma_n$ is (depending on the sample size) a sequence of regularization parameters which allow $\hat K_{n,\gamma_n}^{-1}f$ to exist for all $f\in L^2(\pi)$ and to dampen the sensitivity of $\hat K_{n,\gamma_n}^{-1}f$ to variation in the input $f$.
Then, the feasible CGMM estimator is given by
\begin{equation}
\label{eq:fCGMM}
\hat{\theta}_{n,CGMM}(\gamma_n)=\argmin_{\theta\in\Theta} \left\langle \hat K_{n,\gamma_n}^{-1}\hat{h}_n(\cdot;\theta),\hat{h}_n(\cdot;\theta)\right\rangle_{L^2(\pi)}.
\end{equation}

\cite{Carrasco2017} show that the CGMM estimator is consistent, asymptotically efficient and asymptotically normal (for stationary Markov processes) given a set of assumptions. An earlier version \cite{Carrasco2002} proves the statement for i.i.d.~data with a simpler set of assumptions. We use their assumptions and prove that the tempered stable distributions fulfill them. More precisely, the assumptions are the following.
\begin{ass}\label{ass:CGMM}
\item[(i)] The observed data $\{x_1,\ldots,x_n\}$ are i.i.d.~realizations of $X$ which has values in $\mathbb{R}$ and has p.d.f.~$f(x;\theta)$ with $\theta\in\Theta\subset\mathbb{R}^q$ and $\Theta$ is compact.
\item[(ii)] $\pi$ is the p.d.f.~of a distribution that is absolutely continuous with respect to the Lebesgue measure and strictly positive for all $x\in\mathbb{R}$.
\item[(iii)] The equation
\begin{equation}
\label{eq:momcondass}
\mathbb{E}_{\theta_0}\left[\mathrm{e}^{\mathrm{i}tX}\right]-\varphi_{\theta}(t)=0\ \mathrm{for}\ \mathrm{all}\ t\in\mathbb{R},\ \pi\mathrm{-a.s.}
\end{equation}
has a unique solution $\theta_0$ which is an interior point of $\Theta$. Since characteristic functions uniquely determine distributions \eqref{eq:momcondass} is equivalent to identifiability.
\item[(iv)] $f(x;\theta)$ is continuously differentiable with respect to $\theta$ on $\Theta$.
\item[(v)] $\int_{\mathcal{D}}\sup_{\theta\in\Theta}\Big|\Big|\frac{\partial f(x;\theta)}{\partial \theta}\Big|\Big|\upd x<\infty$.
\item[(vi)] $I_{\theta_0}=\left.\mathbb{E}_{\theta_0}\left[\left(\frac{\partial \log f(X;\theta)}{\partial \theta}\right)\left(\frac{\partial\log f(X;\theta)}{\partial \theta}\right)^{\prime}\right]\right|_{\theta=\theta_0}$ is positive definite.
\end{ass}

We can take any choice of measure $\pi$ such that (ii) is fulfilled. However, for some choices, numerical integration in \eqref{eq:empoperatorK} may be easier to perform. For example, we can use the normal distribution. Instead, \cite{Carrasco2000} considered the Hilbert space $L^2([0,T])$ of real-valued square-integrable functions on $[0,T]$ with $T>0$. However, \citet[][Section 3]{Carrasco2002} discussed that all results transfer by adjusting operations in the corresponding Hilbert spaces. Therefore, in (ii) we can take $\pi$ to be the uniform distribution on $[0,T]$ and replace $L^2(\pi)$ with $L^2([0,T])$. $T$ is arbitrary as long as (ii) is satisfied. For simplicity, we chose $T=1$.

With this, the CGMM estimator satisfies the following asymptotic result.
\begin{prop}\label{prop:CGMMasymnorm}
Under Assumption \ref{ass:CGMM}, the CGMM estimator is consistent and
\begin{equation}
\label{eq:CGMMasymnorm}
n^{1/2}(\hat{\theta}_{n,CGMM}(\gamma_n)-\theta_0)\stackrel{\mathcal{L}}{\rightarrow}N(0,I_{\theta_0}^{-1}),
\end{equation}
as $n\to\infty$, $\gamma_nn^{1/2}\to\infty$ and $\gamma_n\to0$, where $I_{\theta_0}^{-1}$ denotes the inverse of the Fisher information matrix (which exists due to Assumption \ref{ass:CGMM}(vi)). 
\end{prop}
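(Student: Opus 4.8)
The statement is, up to specialisation to the i.i.d.\ case, the asymptotic theorem of \cite{Carrasco2002} (with the dependent-data extensions in \cite{Carrasco2007} and \cite{Carrasco2017}); the plan is therefore to recall the structure of that argument and point out where each part of Assumption~\ref{ass:CGMM} enters, rather than to reproduce the full proof. Write $h_0(t;\theta)=\varphi_{\theta_0}(t)-\varphi_\theta(t)$ for the population moment function, $Q(\theta)=\langle K^{-1}h_0(\cdot;\theta),h_0(\cdot;\theta)\rangle_{L^2(\pi)}$ for the population objective, and $\hat Q_n(\theta)=\langle \hat K_{n,\gamma_n}^{-1}\hat h_n(\cdot;\theta),\hat h_n(\cdot;\theta)\rangle_{L^2(\pi)}$ for the feasible objective minimised in \eqref{eq:fCGMM}; let $D(t)=\partial_\theta\varphi_\theta(t)\big|_{\theta=\theta_0}\in\mathbb{R}^q$ be the Jacobian of $-h_0(t;\cdot)$ at $\theta_0$, with inner products of $\mathbb{R}^q$-valued functions understood entrywise.

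First I would prove consistency by the standard argmin argument, e.g.\ \citet[][Theorem~2.1]{Newey1994}. Assumption~\ref{ass:CGMM}(iii) gives that $Q(\theta)=0$ if and only if $h_0(\cdot;\theta)=0$ $\pi$-a.s., i.e.\ if and only if $\theta=\theta_0$, so $Q$ is uniquely minimised at $\theta_0$; continuity of $\theta\mapsto\varphi_\theta$ and compactness of $\Theta$ (Assumption~\ref{ass:CGMM}(i)) upgrade this to well-separatedness. It remains to show $\sup_{\theta\in\Theta}\big|\hat Q_n(\theta)-Q(\theta)\big|\to_p 0$. Because $|\mathrm{e}^{\mathrm{i}tX}|\le 1$ and $\pi$ is a probability density (Assumption~\ref{ass:CGMM}(ii)), a law of large numbers applied to the kernel \eqref{eq:estkernelk} yields $\|\hat K_n-K\|\to_p 0$ in operator norm, and $\sup_{\theta\in\Theta}\|\hat h_n(\cdot;\theta)-h_0(\cdot;\theta)\|_{L^2(\pi)}=\|\hat\varphi_n-\varphi_{\theta_0}\|_{L^2(\pi)}\to_p 0$ by a dominated-convergence/Glivenko--Cantelli argument for the empirical characteristic function; combining these with $\gamma_n\to0$ and the spectral form of the Tikhonov inverse \eqref{eq:operatorKest} gives the required uniform convergence of $\hat Q_n$ to $Q$, hence $\hat\theta_{n,CGMM}(\gamma_n)\to_p\theta_0$.

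Next I would establish asymptotic normality. Since $\theta_0$ is interior (Assumption~\ref{ass:CGMM}(iii)) and $\varphi_\theta$ is smooth in $\theta$, the first-order condition of \eqref{eq:fCGMM} reads $\mathrm{Re}\,\langle \hat K_{n,\gamma_n}^{-1}\partial_\theta\varphi_{\hat\theta}(\cdot),\hat h_n(\cdot;\hat\theta)\rangle_{L^2(\pi)}=0$, where the Jacobian of the moment function is deterministic. A mean-value expansion in $\theta$ about $\theta_0$ then gives
\[
n^{1/2}\big(\hat\theta_{n,CGMM}(\gamma_n)-\theta_0\big)=-\big(\langle \hat K_{n,\gamma_n}^{-1}D,D\rangle_{L^2(\pi)}+o_p(1)\big)^{-1}\langle \hat K_{n,\gamma_n}^{-1}D,\,n^{1/2}\hat h_n(\cdot;\theta_0)\rangle_{L^2(\pi)}+o_p(1).
\]
A Hilbert-space central limit theorem applies to $n^{1/2}\hat h_n(\cdot;\theta_0)=n^{-1/2}\sum_{j=1}^n\big(\mathrm{e}^{\mathrm{i}tX_j}-\varphi_{\theta_0}(t)\big)$, which converges weakly in $L^2(\pi)$ to a mean-zero Gaussian element $G$ with covariance operator $K$ (kernel \eqref{eq:kernelk}); together with $\hat K_{n,\gamma_n}^{-1}D\to K^{-1}D$ in $L^2(\pi)$ — here the rate condition $\gamma_n n^{1/2}\to\infty$ kills the regularisation bias and $\gamma_n\to0$ the truncation error — the continuous mapping and Slutsky theorems yield
\[
n^{1/2}\big(\hat\theta_{n,CGMM}(\gamma_n)-\theta_0\big)\stackrel{\mathcal L}{\longrightarrow}N\big(0,\,\langle D,K^{-1}D\rangle_{L^2(\pi)}^{-1}\big),
\]
using that $G$ has covariance operator $K$ so that $\mathrm{Var}\big(\langle K^{-1}D,G\rangle\big)=\langle K K^{-1}D,K^{-1}D\rangle=\langle D,K^{-1}D\rangle$.

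It then remains to identify $\langle D,K^{-1}D\rangle_{L^2(\pi)}$ with the Fisher information $I_{\theta_0}$ (which also gives asymptotic efficiency). Differentiating $\varphi_\theta(t)=\int\mathrm{e}^{\mathrm{i}tx}f(x;\theta)\,\upd x$ under the integral (justified by Assumption~\ref{ass:CGMM}(iv)--(v)) and using $\mathbb{E}_{\theta_0}[s(X)]=0$ for the score $s(X)=\partial_\theta\log f(X;\theta_0)$ gives $D(t)=\mathbb{E}_{\theta_0}\big[(\mathrm{e}^{\mathrm{i}tX}-\varphi_{\theta_0}(t))\,s(X)^{\prime}\big]$, so that $\langle D,K^{-1}D\rangle$ equals the Gram matrix of the $L^2$-projection of $s$ onto $\overline{\mathrm{span}}\{\mathrm{e}^{\mathrm{i}tX}-\varphi_{\theta_0}(t):t\in\mathbb{R}\}$; since this closed span is the subspace of mean-zero elements of $L^2$ of the law of $X$ (exponentials are complete — the same fact underlying identification in Assumption~\ref{ass:CGMM}(iii)) and $s(X)$ is mean zero, the projection returns $s(X)$ itself, whence $\langle D,K^{-1}D\rangle=\mathbb{E}_{\theta_0}[s(X)s(X)^{\prime}]=I_{\theta_0}$, invertible by Assumption~\ref{ass:CGMM}(vi). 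I expect the main obstacle to be the regularisation analysis: one must control $(\hat K_{n,\gamma_n}^{-1}-K^{-1})$ applied to the relevant elements, which forces quantifying the eigenvalue decay of the compact operator $K$ and balancing the statistical error $\|\hat K_n-K\|=O_p(n^{-1/2})$ against the regularisation bias through the rates $\gamma_n\to0$, $\gamma_n n^{1/2}\to\infty$, together with checking that $D$ lies in the domain of $K^{-1}$ (equivalently $D\in\mathrm{Range}(K^{1/2})$) — precisely the condition that makes the Cram\'er--Rao bound attainable and that links the moment conditions \eqref{eq:momcond} to the likelihood geometry.
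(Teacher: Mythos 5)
Your sketch is correct and coincides with the paper's treatment: the paper offers no proof of this proposition, instead quoting it from \cite{Carrasco2002} and \cite{Carrasco2017}, and your outline (argmin consistency from identification plus uniform convergence of the regularized objective, mean-value expansion with a Hilbert-space CLT for $n^{1/2}\hat h_n(\cdot;\theta_0)$, and the score-projection identity $\langle D,K^{-1}D\rangle=I_{\theta_0}$ via completeness of the centered exponentials) is exactly the structure of the cited argument. The obstacles you flag — the bias/variance balancing of the Tikhonov regularization under $\gamma_n\to0$, $\gamma_n n^{1/2}\to\infty$, and the requirement $D\in\mathrm{Range}(K^{1/2})$ — are indeed the substantive technical points handled in those references.
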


In practice, we require a reasonable choice of the regularization parameter $\gamma_n$. \cite{Carrasco2017} derived the optimal estimator for $\gamma_n$. However, we choose for simplicity an ad-hoc method for selecting the regularization parameter by simply using a fixed $\gamma_n=0.01$ throughout. We justify this because \cite{Carrasco2002} found that the specific choice of the regularization parameter does not have a striking impact on the estimation precision in their simulations for the stable distribution. See Section \ref{sec:MCstudy} for more discussions about the regularization in practice.

\subsection{Cumulant matching}\label{subsec:GCM}
We also discuss a method of cumulants approach which follows \cite{Kuchler2013}. They match empirical cumulants with their theoretical counterparts. We extend this by using \citeauthor{Hansen1982}'s (\citeyear{Hansen1982}) GMM framework. We call the approach generalized method of cumulants (GMC) to distinguish it from the GMM method using characteristic function moment conditions. However, it fits well into \citeauthor{Hansen1982}'s (\citeyear{Hansen1982}) framework allowing for standard asymptotic theory. This is because we can rewrite cumulant conditions as moment conditions by using the well known relation between cumulants and moments. We start by formulating the problem as a method of moments. Let
\begin{equation}
\label{eq:theorcum}
\mathbb{E}_{\theta_0}\left[g(X;\theta)\right]=0
\end{equation}
denote the theoretical moment conditions and
\begin{equation}
\label{eq:empcum}
\frac{1}{n}\sum_{j=1}^ng(X_j;\theta)=0
\end{equation}
the empirical moment conditions. We build the function $g$ (which belong to moment and not cumulant conditions) by using the following relation between moments and cumulants
\begin{align}
\mathbb{E}[X]&=\kappa_1,\\
\mathbb{E}[X^2]&=\kappa_2+\kappa_1^2,\\
\mathbb{E}[X^3]&=\kappa_3+3\kappa_2\kappa_1+\kappa_1^3,\\
\mathbb{E}[X^4]&=\kappa_4+4\kappa_3\kappa_1+3\kappa_2^2+6\kappa_2\kappa_1^2+\kappa_1^4,\\
&\vdots\\
\mathbb{E}[X^p]&=\sum_{m=1}^p B_{p,m}(\kappa_1,\ldots,\kappa_{p-m+1}),
\end{align}
where $B_{p,m}$ denote incomplete Bell polynomials. In particular for $p$ moment conditions, we choose $g(X;\theta)=(g_1,\ldots,g_p)^{\prime}$ to be
\begin{align}
g_1&=X-\kappa_1,\\
g_2&=X^2-\kappa_2-\kappa_1^2,\\
&\vdots\label{eq:functiong}\\
g_p&=X^p-\sum_{m=1}^p B_{p,m}(\kappa_1,\ldots,\kappa_{p-m+1}).
\end{align}
Here, the theoretical cumulants $\kappa_m$ for the TSS distribution are given in \eqref{eq:cumsTSS}, and for the CTS distribution, given in \eqref{eq:cumsCTS}. For the NTS distribution, cumulants do not have an easy-to-use pattern which is why we do not use this method for the NTS distribution. The asymptotic result then is the following proposition. The proof can be found in \citet{Newey1994}.

\begin{prop}\label{prop:GMMasymnorm}
Under the assumptions of Theorems 2.6 and 3.4 in \cite{Newey1994}, the GMC estimator
\begin{equation}\label{eq:GMC}
\hat{\theta}_{n,GMC}=\argmin_{\theta\in\Theta} \left(\frac{1}{n}\sum_{j=1}^ng(X_j;\theta)\right)^{\prime}\hat{W}\left(\frac{1}{n}\sum_{j=1}^ng(X_j;\theta)\right),
\end{equation}
where $W$ is a positive semi-definite weighting matrix and an estimator $\hat{W}$ with $\hat{W}\stackrel{p}{\rightarrow}W$, is consistent for $\theta_0$. Moreover,
\begin{equation}
\label{eq:GMMasymnorm}
n^{1/2}(\hat{\theta}_{n,GMC}-\theta_0)\stackrel{\mathcal{L}}{\rightarrow}N(0,(G^{\prime}WG)^{-1}G^{\prime}W\Omega W^{\prime}G(G^{\prime}WG)^{-1}),
\end{equation}
with $G:=\left.\mathbb{E}_{\theta_0}\left[\frac{\partial g(X;\theta)}{\partial\theta}\right]\right|_{\theta=\theta_0}$, $\Omega:=\mathbb{E}_{\theta_0}\left[g(X;\theta)g(X;\theta)^{\prime}\right]$.
\end{prop}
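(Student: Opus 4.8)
The plan is to read Proposition~\ref{prop:GMMasymnorm} as a direct instance of the generic GMM consistency and asymptotic-normality theorems of \citet[][Theorems~2.6 and 3.4]{Newey1994}, so the proof reduces to checking that the moment function $g$ built in \eqref{eq:functiong} meets their hypotheses. The points to verify are: (a)~$\Theta$ compact (assumed) and $\theta_0$ an interior point; (b)~$g(x;\theta)$ continuous in $\theta$ for a.e.\ $x$ and continuously differentiable in $\theta$ on a neighbourhood of $\theta_0$; (c)~the envelope bounds $\mathbb{E}_{\theta_0}\big[\sup_{\theta\in\Theta}\|g(X;\theta)\|\big]<\infty$ and $\mathbb{E}_{\theta_0}\big[\sup_{\theta}\|\partial g(X;\theta)/\partial\theta\|\big]<\infty$ on that neighbourhood; (d)~$\Omega$ finite and $n^{1/2}\big(\tfrac1n\sum_j g(X_j;\theta_0)\big)\stackrel{\mathcal{L}}{\rightarrow}N(0,\Omega)$; (e)~the identification statement that \eqref{eq:theorcum} holds only at $\theta=\theta_0$ in $\Theta$; and (f)~$G^{\prime}WG$ nonsingular. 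Granting these, consistency is the standard extremum-estimator argument — a uniform law of large numbers turns the sample objective in \eqref{eq:GMC} into a population objective $\bar g(\theta)^{\prime}W\bar g(\theta)$ (with $\bar g(\theta)=\mathbb{E}_{\theta_0}[g(X;\theta)]$) that is continuous on the compact $\Theta$ and, by (e) with $W$ positive definite, uniquely minimised at $\theta_0$ — and asymptotic normality follows by expanding the first-order condition $G_n^{\prime}\hat W\,\tfrac1n\sum_j g(X_j;\hat\theta_{n,GMC})=0$ about $\theta_0$, using $\hat W\stackrel{p}{\rightarrow}W$, $\hat\theta_{n,GMC}\stackrel{p}{\rightarrow}\theta_0$, a uniform law of large numbers for the Jacobian, and the central limit theorem from (d); this produces exactly the sandwich variance in \eqref{eq:GMMasymnorm}.

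Conditions (b), (c) and (d) are essentially free for the tempered stable families. Each coordinate $g_k(X;\theta)=X^k-\sum_{m=1}^kB_{k,m}(\kappa_1(\theta),\dots,\kappa_{k-m+1}(\theta))$ is a polynomial of degree $k$ in $X$ whose coefficients are finite sums of products of the cumulants, and by the explicit expressions \eqref{eq:cumsTSS} for the TSS and \eqref{eq:cumsCTS} for the CTS these cumulants are real-analytic in $\theta$ with no singularity on the admissible parameter set (for the TSS $\alpha\in(0,1)$, and for the CTS $\Gamma(m-\alpha)$ with $m\ge2$ stays finite for every $\alpha\in(0,2)$, while $\kappa_1=\mu$). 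Hence $g$ is smooth in $\theta$, and on the compact $\Theta$ its coefficients and those of $\partial g/\partial\theta$ are uniformly bounded, so both suprema in (c) are dominated by $C\,(1+|X|+\dots+|X|^p)$; since the cumulant generating functions \eqref{eq:cgfTSS} and \eqref{eq:cgfCTS} are finite on a neighbourhood of the origin, $X$ has finite moments of all orders under $\theta_0$, in particular $\mathbb{E}_{\theta_0}[|X|^{2p}]<\infty$, which yields (c) and the finiteness of $\Omega=\mathbb{E}_{\theta_0}[g(X;\theta_0)g(X;\theta_0)^{\prime}]$. As \eqref{eq:functiong} is constructed so that $\mathbb{E}_{\theta_0}[g(X;\theta_0)]=0$, the central limit statement in (d) is just the Lindeberg--Lévy theorem for the i.i.d.\ sample.

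The real work is the identification condition (e), and with it the full column rank of $G$ needed for (f). For this $g$, \eqref{eq:theorcum} at $\theta$ is equivalent — through the bijection between the first $p$ raw moments and the first $p$ cumulants — to $\kappa_k(\theta)=\kappa_k(\theta_0)$ for $k=1,\dots,p$, so one must show the map $\theta\mapsto(\kappa_1(\theta),\dots,\kappa_p(\theta))$ is injective on $\Theta$ when $p\ge\dim\theta$ ($p=3$ for the TSS, $p=6$ for the CTS). For the TSS this is elementary and constructive: \eqref{eq:cumsTSS} gives $\kappa_{m+1}/\kappa_m=(m-\alpha)/\lambda$, so $\lambda=(\kappa_3/\kappa_2-\kappa_2/\kappa_1)^{-1}$, then $\alpha=1-\lambda\,\kappa_2/\kappa_1$, then $\delta=\kappa_1\lambda^{1-\alpha}/\Gamma(1-\alpha)$; the same computation shows the Jacobian of $(\kappa_1,\kappa_2,\kappa_3)$ at $\theta_0$ is invertible, so $G$ has full rank and, choosing $W$ positive definite (e.g.\ the efficient $W=\Omega^{-1}$), $G^{\prime}WG$ is nonsingular. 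For the CTS one argues in the same spirit — $\kappa_1=\mu$ fixes $\mu$, and $(\alpha,\delta_+,\delta_-,\lambda_+,\lambda_-)$ has to be recovered from $\kappa_2,\dots,\kappa_6$ in \eqref{eq:cumsCTS}, where for fixed $\alpha$ the sequence $m\mapsto\kappa_m/\Gamma(m-\alpha)$ is a two-term exponential sum $\delta_+\lambda_+^{\alpha}(1/\lambda_+)^m+\delta_-\lambda_-^{\alpha}(-1/\lambda_-)^m$, which is pinned down by finitely many of its values (a Prony-type/Hankel-determinant argument), recovering $\{\lambda_+,\lambda_-\}$ (distinguishable since one base is positive and the other negative), the amplitudes, and then $\alpha$ from a further cumulant ratio — and I expect this separation of the $+$ and $-$ parts and of $\alpha$ from the scales to be the main obstacle. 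It can, if preferred, be short-circuited by observing that the parametrizations of Section~\ref{sec:distributions} are injective, so identifiability already holds and one only needs that $p$ cumulants suffice, but a self-contained argument is cleaner. Interiority of $\theta_0$ in (a) is part of the maintained assumptions, which completes the verification.
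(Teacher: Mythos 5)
Your proposal is correct, but it does considerably more than the paper does for this particular proposition, and it is worth being clear about where the extra work lands. Proposition \ref{prop:GMMasymnorm} is stated \emph{conditionally} on the hypotheses of Theorems 2.6 and 3.4 of \citet{Newey1994}, and the paper's entire proof is the citation ``The proof can be found in \citet{Newey1994}'': the standard extremum-estimator consistency argument plus the first-order-condition expansion you sketch in your first paragraph is exactly that cited content, so for the proposition proper you and the paper coincide. Your second and third paragraphs — smoothness and envelope bounds for $g$, finiteness of all moments from the cumulant generating functions, and identification — are not part of this proposition at all; in the paper that verification is deferred to Theorem \ref{thm:GMC}, which establishes only \emph{local} identification via the rank of $G$ (explicitly computing a $3\times3$ determinant for the TSS), and the paper immediately concedes that ``local identification is necessary but not sufficient for the consistency.'' Your constructive inversion $\lambda=(\kappa_3/\kappa_2-\kappa_2/\kappa_1)^{-1}$, $\alpha=1-\lambda\kappa_2/\kappa_1$, $\delta=\kappa_1\lambda^{1-\alpha}/\Gamma(1-\alpha)$ is correct and actually gives \emph{global} identification for the TSS, recovering what the paper attributes to \citet[][Proposition 5.4]{Kuchler2013} and strictly more than Theorem \ref{thm:GMC}(a). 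By contrast, your Prony-type sketch for the CTS is, as you acknowledge, not a proof: the separation of $(\alpha,\delta_\pm,\lambda_\pm)$ from six cumulants is genuinely delicate (the ``bases'' $1/\lambda_+$ and $-1/\lambda_-$ enter only after dividing by $\Gamma(m-\alpha)$ with $\alpha$ unknown, so the Hankel argument cannot be run at fixed $\alpha$), and the fallback ``the parametrization is injective'' establishes identifiability of the distribution, not that $p$ cumulants suffice. This gap does not affect the conditional Proposition \ref{prop:GMMasymnorm}, but if you intend your write-up as a self-contained consistency proof for the CTS cumulant estimator, it is an open step — one the paper itself also leaves open. A last small point: the proposition allows positive semi-definite $W$, for which the correct identification condition is $W\,\mathbb{E}_{\theta_0}[g(X;\theta)]=0$ only at $\theta=\theta_0$, rather than the positive-definite simplification you use.
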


In principle, $W$ could be any positive semi-definite weighting matrix. From standard GMM theory, we however know that taking $W=\Omega^{-1}$ and similarly $\hat{W}=\hat{\Omega}^{-1}$ yields the most efficient GMM estimator \cite[see, e.g.,][]{Hansen1982}, as long as $\Omega$ is invertible, where $\Omega=\mathbb{E}_{\theta_0}\left[g(X;\theta)g(X;\theta)^{\prime}\right]$ as above. Invertibility is not necessarily given for all possible data sets for the GMC estimator. However, it typically holds in practice. To avoid invertibility we use a regularization as in Section \ref{subsec:GMM}. Under the choice $W=\Omega^{-1}$, the asymptotic variance simplifies to $(G^{\prime}\Omega^{-1}G)^{-1}$.


\section{Asymptotic results}\label{sec:results}

We present our asymptotic results for the MLE, the CGMM, and the GMC estimation method. The GMM method of \cite{Feuerverger1981} has already been discussed to have problems with the singularity of the asymptotic covariance matrix. All proofs can be found in Appendix \ref{app:proofs}. We start with a theorem for the MLE. 
\begin{thm}\label{thm:ML}
Fix any $0<\varepsilon<M<\infty$. The MLE $\hat{\theta}_{n,ML}$ for $\theta_0\in\mathrm{int}(\Theta)$ of
\begin{enumerate}
\item[(a)] the $TSS(\alpha,\delta,\lambda)$ distribution with $\theta=(\alpha,\delta,\lambda)\in\Theta=[\varepsilon,1-\varepsilon]\times[\varepsilon,M]^2$,
\item[(b)] the $CTS(\alpha,\delta_+,\delta_-,\lambda_+,\lambda_-,\mu)$ distribution with $\theta=(\alpha,\delta_+,\delta_-,\lambda_+,\lambda_-,\mu)\in\Theta=[\varepsilon,2-\varepsilon]\times[\varepsilon,M]^4\times[-M,M]$,
\item[(c)] the $NTS(\alpha,\beta,\delta,\lambda,\mu)$ distribution with $\theta=(\alpha,\beta,\delta,\lambda,\mu)\in\Theta=[\varepsilon,1-\varepsilon]\times[-M,M]\times[\varepsilon,M]^2\times[-M,M]$,
\end{enumerate}
is consistent, asymptotically normal and asymptotically efficient as $n\to\infty$.
\end{thm}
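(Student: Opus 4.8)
The plan is to verify, for each of the three distribution classes, the six conditions of Assumption \ref{ass:ML}, whereupon Proposition \ref{prop:MLasymnorm} immediately yields consistency, asymptotic normality, and (since the limiting variance is $I_{\theta_0}^{-1}$) asymptotic efficiency. Conditions (ii) is built into the statement of the theorem: each parameter space $\Theta$ is a product of closed bounded intervals, hence compact, and $\theta_0$ is assumed interior. So the real work is (i), (iii), (iv), (v), and (vi). For (i), strong consistency of the MLE for the CTS class is already available from \cite{Grabchak2016}; for the TSS and NTS classes I would either appeal to an analogous argument or, more cheaply, invoke the standard Wald-type consistency theorem (e.g.\ \citet[Theorem 2.5]{Newey1994}), which requires identifiability, compactness of $\Theta$, continuity of $\theta\mapsto\log f(x;\theta)$, and a dominating function for $\sup_{\theta}|\log f(x;\theta)|$. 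Identifiability follows from the injectivity of the parameter-to-characteristic-function maps \eqref{eq:charTSS}, \eqref{eq:charCTS}, \eqref{eq:charNTS} on the stated parameter ranges.

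The analytic heart is (iii) and (vi): smoothness of $f(x;\theta)$ in $\theta$ together with integrable/expectation bounds on first and second $\theta$-derivatives. For the TSS distribution I would work from the series representation \eqref{eq:dTSSseries} (equivalently the factorization \eqref{eq:dTSS}), differentiating term by term in $\theta=(\alpha,\delta,\lambda)$ and showing the differentiated series converges locally uniformly on a neighbourhood $\mathcal N$ of $\theta_0$ contained in $[\varepsilon,1-\varepsilon]\times[\varepsilon,M]^2$; the $\Gamma(1+\alpha k)$ growth in the series is controlled by the $1/k!$ factor and by the $y^{-(1+\alpha k)}$ decay for $y$ bounded away from $0$, while the behaviour near $y=0$ is governed by the stable-density tail, which is exponentially small there. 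For the CTS and NTS distributions, where only \eqref{eq:FourierInv} is available, I would instead differentiate under the Fourier integral: $\partial_\theta^j f(x;\theta)=\frac{1}{2\pi}\int_{\mathbb R}\mathrm e^{-\mathrm itx}\,\partial_\theta^j\varphi_\theta(t)\,\upd t$, which is legitimate provided $\partial_\theta^j\varphi_\theta(t)$ is dominated, uniformly for $\theta\in\mathcal N$, by an integrable function of $t$. From \eqref{eq:charCTS} one computes that $|\varphi_\theta(t)|$ decays like $\exp(-c|t|^\alpha)$ for large $|t|$ with $c$ bounded below on $\mathcal N$, and each $\theta$-derivative brings down at most polynomial-in-$t$ factors (e.g.\ $\partial_{\delta_+}$ produces the bracket $(\lambda_+-\mathrm it)^\alpha-\lambda_+^\alpha+\mathrm it\alpha\lambda_+^{\alpha-1}$, which is $O(|t|^\alpha)$; $\partial_\alpha$ brings an extra $\log$-type factor), so the product still decays faster than any polynomial and the dominated-convergence argument goes through; the NTS case \eqref{eq:charNTS} is handled identically since $\lambda-\mathrm it\beta+t^2/2$ has real part $\ge\lambda+t^2/2-|\beta||t|$, again giving sub-Gaussian decay of $|\varphi_\theta(t)|$. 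This yields (iii) (the supports are $\mathbb R_+$ for TSS, $\mathbb R$ for CTS and NTS, and are $\theta$-independent) and, integrating the derivative bounds in $x$, also (iv). For (vi) I would combine the pointwise bounds on $\partial_\theta f$ and $\partial_\theta^2 f$ with lower bounds on $f(x;\theta)$ away from $0$ to control $\sup_{\mathcal N}\|\partial_\theta^2\log f\|$ by an integrable envelope, then take $\mathbb E_{\theta_0}$.

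Condition (v), positive definiteness of the Fisher information $I_{\theta_0}$, I would obtain by a standard non-degeneracy argument: $v^\top I_{\theta_0}v=0$ forces $v^\top\partial_\theta\log f(X;\theta_0)=0$ $P_{\theta_0}$-a.s., i.e.\ $v^\top\partial_\theta f(\cdot;\theta_0)\equiv0$ on the support; applying the Fourier transform and using that $\theta\mapsto\varphi_\theta$ is a smooth injective immersion at $\theta_0$ (its Jacobian $\partial_\theta\varphi_{\theta_0}(t)$, read off from \eqref{eq:charTSS}/\eqref{eq:charCTS}/\eqref{eq:charNTS}, has full column rank as a function of $t$ because the coordinate functions $t\mapsto\partial_{\theta_i}\varphi_{\theta_0}(t)$ are linearly independent in $L^1$) forces $v=0$. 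I expect the main obstacle to be the uniform-in-$\theta$ domination bounds underlying (iii), (iv), (vi) — specifically, pinning down that the exponent $c=c(\theta)$ in the $\exp(-c|t|^\alpha)$ decay of $|\varphi_\theta(t)|$ stays bounded away from $0$ on a neighbourhood of $\theta_0$ (this is where the restriction $\alpha\le 2-\varepsilon$ for CTS and $\alpha\le 1-\varepsilon$ for TSS/NTS, keeping $\alpha$ away from the degenerate endpoints, is used), and that the polynomial and logarithmic factors introduced by differentiation never overwhelm this decay. For the TSS case the parallel obstacle is the term-by-term differentiation of \eqref{eq:dTSSseries} and its uniform convergence near $y=0$, which again leans on the exponential smallness of the stable subordinator density at the origin. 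Once these envelopes are in hand, everything else is a routine bookkeeping exercise, and the three parts (a), (b), (c) are proved by the same template with the distribution-specific estimates above.
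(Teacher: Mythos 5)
Your overall template (verify Assumption \ref{ass:ML} and invoke Proposition \ref{prop:MLasymnorm}) and your handling of conditions (i), (ii), and (v) coincide with the paper's. The genuine gap lies in conditions (iv) and, more seriously, (vi) for the CTS and NTS cases, where you replace the paper's convolution representation \eqref{eq:convolutionCTS2} and subordination representation \eqref{eq:dNTS} by differentiation under the Fourier inversion integral \eqref{eq:FourierInv}. Pulling absolute values inside that integral yields only $\sup_{\theta\in\mathcal{N}}\big|\partial_\theta^j f(x;\theta)\big|\le\frac{1}{2\pi}\int\sup_{\theta}\big|\partial^j_\theta\varphi_\theta(t)\big|\,\upd t<\infty$, a bound that is uniform in $x$ but carries no decay in $x$. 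For (iv) you need $\int_{\mathcal{D}}\sup_\theta\|\partial_\theta f\|\,\upd x<\infty$, so a constant envelope on $\mathbb{R}$ does not suffice and ``integrating the derivative bounds in $x$'' does not close the step; integrating by parts in $t$ buys only polynomial decay $O(|x|^{-m})$. For (vi) the situation is worse: you must bound $\sup_{\theta\in\mathcal{N}}\|\partial^2_\theta\log f(x;\theta)\|$, which behaves like $\|\partial^2_\theta f/f\|$, and integrate it against $f(\cdot;\theta_0)$, while the CTS and NTS densities have exponentially light tails (for CTS, $f(x;\theta)$ decays like $\mathrm{e}^{-\lambda_+x}x^{-1-\alpha}$ as $x\to\infty$). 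A merely polynomial-in-$x$ bound on $\partial^2_\theta f$ divided by an exponentially small $f(x;\theta)$ explodes, and the resulting envelope need not be $f(\cdot;\theta_0)$-integrable, especially once the supremum over $\theta\in\mathcal{N}$ decouples the tempering rates in numerator and denominator. What is actually needed --- and what the paper extracts from the DuMouchel-type series and asymptotic expansions \eqref{eq:dTSSseries}, \eqref{eq:dTSSseries2} pushed through \eqref{eq:denscomposition2}, \eqref{eq:convolutionCTS2} and \eqref{eq:dNTS} --- is that the $\theta$-derivatives of the density decay in $x$ at the \emph{same} rate as the density itself up to logarithmic factors, so that $\partial^2_\theta\log f=O(\log^2|x|)$. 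Your sketch supplies no mechanism for this; to rescue the Fourier route one would have to shift the integration contour into the strip of analyticity $-\lambda_-<\Im t<\lambda_+$ to recover exponential $x$-decay of the derivatives. That is a missing idea, not bookkeeping.

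Two smaller points. First, for the TSS case your plan to control the behaviour near $y=0$ by term-by-term differentiation of \eqref{eq:dTSSseries} is fragile: the paper explicitly notes that this representation does not work as $y\to0$ and instead switches to the small-argument expansion \eqref{eq:dTSSseries2}, using uniqueness of asymptotic expansions to conclude that the relevant bound is $O(1)$ at the origin; your appeal to the exponential smallness of the stable density there is the right intuition but needs this (or an equivalent) device to become a proof. Second, minor corrections: the NTS characteristic function decays like $\exp(-c|t|^{2\alpha})$, which is stretched-exponential rather than sub-Gaussian for $\alpha<1$ (harmless for integrability), and strong consistency for the TSS and NTS classes is already covered by the cited sections of Grabchak, so no separate Wald-type argument is required.
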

We introduce the numbers $\varepsilon$ and $0$ to ensure that the parameter space $\Theta$ is compact. \citet[][Section 3.2 \& 3.3]{Grabchak2016} has already established strong consistency. Therefore, it only remains to show asymptotic normality and efficiency.

We next show that the CGMM also possesses the desired asymptotic properties for our tempered stable distributions.
\begin{thm}\label{thm:CGMM}
The CGMM estimator $\hat{\theta}_{n,CGMM}(\gamma_n)$ for $\theta_0\in\mathrm{int}(\Theta)$ of
\begin{enumerate}
\item[(a)] the $TSS(\alpha,\delta,\lambda)$ distribution with $\theta=(\alpha,\delta,\lambda)\in\Theta=[\varepsilon,1-\varepsilon]\times[\varepsilon,M]^2$,
\item[(b)] the $CTS(\alpha,\delta_+,\delta_-,\lambda_+,\lambda_-,\mu)$ distribution with $\theta=(\alpha,\delta_+,\delta_-,\lambda_+,\lambda_-,\mu)\in\Theta=[\varepsilon,2-\varepsilon]\times[\varepsilon,M]^4\times[-M,M]$,
\item[(c)] the $NTS(\alpha,\beta,\delta,\lambda,\mu)$ distribution with $\theta=(\alpha,\beta,\delta,\lambda,\mu)\in\Theta=[\varepsilon,1-\varepsilon]\times[-M,M]\times[\varepsilon,M]^2\times[-M,M]$,
\end{enumerate}
is consistent, asymptotically normal and asymptotically efficient as $n\to\infty$ and $\gamma_nn^{1/2}\to\infty$ and $\gamma_n\to0$.
\end{thm}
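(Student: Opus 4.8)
The plan is to verify Assumption \ref{ass:CGMM}(i)--(vi) for each of the three distributions and then invoke Proposition \ref{prop:CGMMasymnorm}. Part (i) is immediate: the samples are i.i.d.\ by hypothesis, the distributions live on $\mathbb{R}$ (or on $\mathbb{R}_+\subset\mathbb{R}$ for the TSS), and the parameter spaces $\Theta$ displayed in (a), (b), (c) are finite Cartesian products of closed bounded intervals, hence compact. Part (ii) is a matter of \emph{choice} and is under our control: following the remark after the assumption, we take $\pi$ to be the uniform density on $[0,1]$ (or, equally well, a standard normal density on $\mathbb{R}$), which is absolutely continuous and strictly positive on its support. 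Part (iii) follows because the characteristic functions \eqref{eq:charTSS}, \eqref{eq:charCTS}--\eqref{eq:charCTS1}, and \eqref{eq:charNTS} are all real-analytic (indeed entire, after the branch choice) functions of $t$ that uniquely determine the distribution; injectivity of the parameter-to-characteristic-function map on $\mathrm{int}(\Theta)$ gives identifiability, and $\theta_0\in\mathrm{int}(\Theta)$ is assumed. The one point needing a short argument is that distinct parameter vectors give distinct characteristic functions — this can be read off by matching, e.g., the behavior of $\psi$ near $0$ (the cumulants \eqref{eq:cumsTSS}, \eqref{eq:cumsCTS}) and the location of the singularities of the analytic continuation (at $\lambda$, resp.\ $\lambda_+,-\lambda_-$), which pins down each coordinate of $\theta$.

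The substantive work is in (iv) and (v): continuous differentiability of $f(x;\theta)$ in $\theta$ on all of $\Theta$, together with a dominated-derivative bound $\int_{\mathcal D}\sup_{\theta\in\Theta}\|\partial_\theta f(x;\theta)\|\,\upd x<\infty$. For the TSS distribution I would work from the series representation \eqref{eq:dTSSseries} (or from the factorization \eqref{eq:dTSS} together with known smoothness and tail estimates for the stable subordinator density), differentiating term by term in $(\alpha,\delta,\lambda)$ and checking that the differentiated series converges uniformly for $\theta$ in the compact set $[\varepsilon,1-\varepsilon]\times[\varepsilon,M]^2$; the exponential factor $\mathrm{e}^{-\lambda y}$ controls the right tail and the $\varepsilon$-bounds keep the Gamma factors and the powers $y^{-(1+\alpha k)}$ under control near $0$. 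For the CTS and NTS distributions no stable-density shortcut is available, so I would instead pass through the Fourier inversion \eqref{eq:FourierInv}: since $\varphi_\theta(t)$ in \eqref{eq:charCTS} and \eqref{eq:charNTS} decays like $\exp(-c|t|^\alpha)$ as $|t|\to\infty$ with $c$ bounded below uniformly on $\Theta$ (because $\delta_\pm\ge\varepsilon$, $\alpha\ge\varepsilon$), the inversion integral and all its $\theta$-derivatives converge absolutely and uniformly; differentiating under the integral sign, $\partial_\theta f(x;\theta)=\frac{1}{2\pi}\int \mathrm{e}^{-\mathrm{i}tx}\,\partial_\theta\varphi_\theta(t)\,\upd t$, and one bounds $\sup_{\theta\in\Theta}\|\partial_\theta\varphi_\theta(t)\|$ by a polynomial-in-$t$ times $\exp(-c|t|^\alpha)$, which is integrable; one then multiplies by $(1+x^2)^{-1}$-type weights (obtained by integrating by parts in $t$ twice, using smoothness of $\varphi_\theta$) to get an $x$-integrable dominating function, giving (v). Continuity of these derivatives in $\theta$ follows from the same dominated-convergence argument. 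Part (vi), positive definiteness of the Fisher information $I_{\theta_0}$, is exactly the condition already needed for the MLE in Theorem \ref{thm:ML}; I would establish it once and cite it here. A clean way to see $I_{\theta_0}\succ 0$ is to argue that if $a^\top\partial_\theta\log f(x;\theta_0)=0$ for a.e.\ $x$ and some vector $a\ne0$, then $a^\top\partial_\theta\varphi_{\theta_0}(t)\equiv 0$ by taking Fourier transforms, contradicting the injectivity/identifiability established in (iii) (equivalently, contradicting the fact that the cumulant map has full-rank Jacobian at $\theta_0$, which one checks directly from \eqref{eq:cumsTSS} and \eqref{eq:cumsCTS}).

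With (i)--(vi) in hand for each of (a), (b), (c), Proposition \ref{prop:CGMMasymnorm} applies verbatim and yields consistency together with $n^{1/2}(\hat\theta_{n,CGMM}(\gamma_n)-\theta_0)\stackrel{\mathcal L}{\to}N(0,I_{\theta_0}^{-1})$ as $n\to\infty$, $\gamma_n n^{1/2}\to\infty$, $\gamma_n\to0$; attainment of the Cram\'er--Rao bound $I_{\theta_0}^{-1}$ is precisely asymptotic efficiency. The main obstacle, as above, is the uniform-in-$\theta$ integrable domination of $\partial_\theta f$ for the CTS and NTS cases, where the density is only accessible through Fourier inversion; the key technical lever there is the uniform sub-exponential decay $|\varphi_\theta(t)|\le C\exp(-c|t|^\alpha)$ on the compact $\Theta$, which survives differentiation in $\theta$ up to polynomial-in-$t$ factors and lets one interchange $\partial_\theta$ with $\int\,\upd t$ and then with $\int\,\upd x$. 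A minor secondary point is handling the $\alpha=1$ slice of the CTS family, where \eqref{eq:charCTS1} replaces \eqref{eq:charCTS}; one notes that $\varphi_\theta(t)$ and its $\theta$-derivatives extend continuously across $\alpha=1$ (as remarked in Section \ref{subsec:CTS}) and satisfy the same decay bound, so the estimates above go through uniformly on the full $\Theta$.
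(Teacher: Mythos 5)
Your overall strategy coincides with the paper's: verify Assumption \ref{ass:CGMM} condition by condition and invoke Proposition \ref{prop:CGMMasymnorm}, with (i), (ii) dispatched by compactness and the choice of $\pi$, and the analytic conditions (iv)--(vi) recycled from the maximum likelihood analysis. The paper's actual proof is very short for exactly that reason: it simply cites the proof of Theorem \ref{thm:ML} for (iv)--(vi), so the only new work is identifiability (iii). There you take a genuinely different route. The paper argues via the uniqueness of the L\'evy--Khintchine representation: equality of characteristic functions forces equality of the L\'evy densities $\mathrm{e}^{-\lambda r}\delta r^{-1-\alpha}=\mathrm{e}^{-\lambda_0 r}\delta_0 r^{-1-\alpha_0}$ for all $r>0$, and taking logarithms reduces this to the linear independence of $1$, $r$, and $\log r$. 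You instead read the parameters off the cumulants near $t=0$ together with the location and order of the branch singularities of the analytic continuation of $\varphi_\theta$. Both work; the L\'evy-measure argument is cleaner and extends uniformly to all three families (including NTS, where the cumulants ``do not have an easy pattern''), whereas your singularity-matching argument needs to be spelled out separately for each family. Your self-contained Fourier-inversion derivation of (iv)--(v) for CTS and NTS (uniform decay $|\varphi_\theta(t)|\le C\mathrm{e}^{-c|t|^{\alpha}}$ on the compact $\Theta$, differentiation under the integral, two integrations by parts in $t$ to manufacture $x^{-2}$ decay) is also a legitimate and arguably more direct alternative to the paper's convolution-of-$TS^{\prime}$-densities and asymptotic-expansion machinery; the paper only avoids it because the heavier ML analysis was needed anyway for the second-derivative conditions of Assumption \ref{ass:ML}.

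Two soft spots to tighten. First, the characteristic functions are not entire in $t$: $(\lambda-\mathrm{i}t)^{\alpha}$ has branch points at $t=\mp\mathrm{i}\lambda_{\pm}$, so $\varphi_\theta$ is only real-analytic on $\mathbb{R}$ with an analytic continuation to a strip; this does not harm your argument (indeed the finite singularities are what you exploit), but the claim as written is false. Second, in (vi) the step ``$a^{\top}\partial_\theta\varphi_{\theta_0}\equiv0$ contradicts identifiability'' is not a valid implication on its own, since injectivity of $\theta\mapsto\varphi_\theta$ does not imply injectivity of its differential. Your parenthetical fallback --- full rank of the Jacobian of the cumulant map, checked directly from \eqref{eq:cumsTSS} and \eqref{eq:cumsCTS} --- repairs this for TSS and CTS (and is essentially what the paper does in Theorem \ref{thm:GMC}), but for NTS you would need the linear-independence argument applied directly to the components of $\partial_\theta\varphi_{\theta_0}(t)$, as the paper does when verifying Assumption \ref{ass:ML}(v).
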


As a side product, in the proofs of the parts (a) and (b) of the Theorems \ref{thm:ML} and \ref{thm:CGMM} we also verify that the assumptions hold for the TS' distribution hold. However, asymptotic normality does not hold for the full range of $\alpha\in(0,2)$ but only for $\alpha\ge1$. The reason is, as already mentioned in Section \ref{subsec:PTS}, that the support of the density function of the TS' distribution depends on $\theta$ for $\alpha\in(0,1)$. We summarize the result in the following corollary. 

\begin{cor}\label{cor:PTS}
The ML and the CGMM estimators for the $TS^{\prime}(\alpha,\delta,\lambda)$ with $(\alpha,\delta,\lambda)\in\Theta=[1,2-\varepsilon]\times[\varepsilon,M]^2$ is consistent, asymptotically normal and asymptotically efficient as $n\to\infty$ and $\gamma_nn^{1/2}\to\infty$ and $\gamma_n\to0$.
\end{cor}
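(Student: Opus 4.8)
The plan is to reuse almost verbatim the verifications already carried out for the $TSS$ distribution in the proofs of Theorems \ref{thm:ML}(a) and \ref{thm:CGMM}(a), invoking the remark that those proofs simultaneously cover the $TS'$ distribution, and then to observe that the only obstruction---dependence of the support of the density on $\theta$---disappears once $\alpha$ is restricted to $[1,2-\varepsilon]$. Concretely, for $\alpha\ge1$ the $TS'(\alpha,\delta,\lambda)$ distribution is supported on all of $\mathbb{R}$ (as noted at the end of Section \ref{subsec:PTS}), so the support is fixed and independent of $\theta$; this is exactly the property whose failure for $\alpha<1$ blocked the argument. Thus Assumption \ref{ass:ML}(iii) and Assumption \ref{ass:CGMM}(i) are met on $\Theta=[1,2-\varepsilon]\times[\varepsilon,M]^2$, which is compact, so Assumption \ref{ass:ML}(ii) and compactness in Assumption \ref{ass:CGMM}(i) hold as well.

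For the MLE I would then check Assumption \ref{ass:ML}(iii)--(vi) by the same route as in Theorem \ref{thm:ML}(b): smoothness of $f_{TS'}$ in $\theta$ follows from the representation \eqref{eq:denscomposition2} together with smoothness of the stable density $f_{S(\alpha,\delta)}$ (here one uses $\alpha\ge1$, away from the problematic region, and differentiability of the FFT/Fourier-inversion representation \eqref{eq:FourierInv} applied to the characteristic function \eqref{eq:charPTS}, which is integrable). The integrability bounds in (iv) and (vi) and positive definiteness of the Fisher information in (v) are inherited from the corresponding estimates for $CTS$, since by \eqref{eq:convolutionCTS} a $TS'$ variable is a building block of a $CTS$ variable and the relevant tail and derivative bounds on $\varphi_{TS'}$ are the one-sided pieces of those for $\varphi_{CTS}$; positive definiteness of $I_{\theta_0}$ follows because the three-dimensional parameter $(\alpha,\delta,\lambda)$ is identified by the distribution. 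Consistency is either taken from \citet[][Section 3.2 \& 3.3]{Grabchak2016} (whose arguments apply to $TS'$ for $\alpha\ge1$) or established by the standard compactness-plus-identifiability argument, and then Proposition \ref{prop:MLasymnorm} delivers asymptotic normality and efficiency.

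For the CGMM estimator the verification of Assumption \ref{ass:CGMM} is even lighter: (i) and (ii) are immediate (take $\pi$ uniform on $[0,1]$ as elsewhere), (iii) is identifiability of $(\alpha,\delta,\lambda)$ via the characteristic function \eqref{eq:charPTS}, which holds since characteristic functions determine distributions and the map $\theta\mapsto\varphi_{TS'}(\cdot;\theta)$ is injective, (iv)--(v) follow from differentiability and the integrable derivative bound on $f_{TS'}$ as above, and (vi) is the same positive-definiteness statement as for the MLE. Then Proposition \ref{prop:CGMMasymnorm} yields the claim under $\gamma_n n^{1/2}\to\infty$, $\gamma_n\to0$. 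The main obstacle I anticipate is not conceptual but technical: establishing the uniform integrability bounds $\int_{\mathbb{R}}\sup_{\theta}\|\partial f_{TS'}/\partial\theta\|\,\upd x<\infty$ and the analogous second-derivative bound near $\alpha=1$, where the stable density and its $\alpha$-derivative are most delicate, and confirming that the one-sided exponential tempering in \eqref{eq:charPTS} still gives enough decay of $\varphi_{TS'}$ and its $\theta$-derivatives to justify differentiating under the Fourier integral---but this is precisely the estimate already done for $CTS$, restricted to one side, so it carries over.
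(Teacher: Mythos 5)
Your proposal matches the paper's own argument: the paper proves the corollary precisely by noting that the verifications of Assumptions \ref{ass:ML} and \ref{ass:CGMM} for the $TS'$ density are already carried out as a side product in the proofs of Theorems \ref{thm:ML} and \ref{thm:CGMM} (parts (a) and (b), via the representation \eqref{eq:denscomposition2} and the convolution structure of the CTS density), and that the sole obstruction for $\alpha<1$ --- the $\theta$-dependent support --- disappears on $[1,2-\varepsilon]$. Your identification of that obstruction, your reuse of the CTS-proof bounds as the one-sided pieces, and your appeal to Propositions \ref{prop:MLasymnorm} and \ref{prop:CGMMasymnorm} are all exactly the paper's route.
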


\citet[][Lemma 6.1]{Kuchler2013} showed that the method of cumulant matching is locally identified for $\alpha\in(0,1)$, i.e., the equations of the moment conditions do have a root which is unique in an open neighborhood around $\theta_0$ by the implicit function theorem. Moreover, they showed in Proposition 5.4 that the TSS is globally identified and thus consistent. We show local identification for the GMC.

\begin{thm}\label{thm:GMC}
Consider the GMC estimator $\hat{\theta}_{n,GMC}$ for $\theta_0\in\mathrm{int}(\Theta)$. Then
\begin{enumerate}
\item[(a)] the $TSS(\alpha,\delta,\lambda)$ distribution with $\theta=(\alpha,\delta,\lambda)\in\Theta=[\varepsilon,1-\varepsilon]\times[\varepsilon,M]^2$,
\item[(b)] the $CTS(\alpha,\delta_+,\delta_-,\lambda_+,\lambda_-,\mu)$ distribution with $\theta=(\alpha,\delta_+,\delta_-,\lambda_+,\lambda_-,\mu)\in\Theta=[\varepsilon,2-\varepsilon]\times[\varepsilon,M]^4\times[-M,M]$,
\end{enumerate} 
is locally identified.
\end{thm}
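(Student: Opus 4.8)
\emph{Setup.} The plan is to establish local identification through the standard rank criterion for GMM‑type estimators: since $\theta_0$ is an interior point of $\Theta$ and we use exactly $q:=\dim\theta$ moment conditions (the first $q$ raw moments; $q=3$ for the $TSS$, $q=6$ for the $CTS$), it suffices to show that the Jacobian $G(\theta_0)=\mathbb{E}_{\theta_0}\!\left[\partial g(X;\theta)/\partial\theta\right]\big|_{\theta=\theta_0}$ is nonsingular (see, e.g., \cite{Newey1994}). By the construction of $g$ in \eqref{eq:functiong} every coordinate has the form $g_m(X;\theta)=X^m-\mu_m(\theta)$ with $\mu_m(\theta):=\mathbb{E}_\theta[X^m]$, so $\partial g_m/\partial\theta=-\partial\mu_m/\partial\theta$ is non‑random and $G(\theta_0)=-\partial(\mu_1,\dots,\mu_q)/\partial\theta\big|_{\theta_0}$. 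The $\mu_m$ are smooth in $\theta$ on the stated compact sets because the cumulants \eqref{eq:cumsTSS} and \eqref{eq:cumsCTS} are analytic there — in particular at $\alpha=1$, where only the form \eqref{eq:charCTS1} of the characteristic function degenerates, not the cumulants. Hence, if $G(\theta_0)$ is nonsingular, the inverse function theorem applied to $\theta\mapsto\mathbb{E}_{\theta_0}[g(X;\theta)]=\big(\mu_m(\theta_0)-\mu_m(\theta)\big)_{m\le q}$ yields a neighbourhood of $\theta_0$ on which the moment equations have the unique root $\theta_0$, which is exactly local identification.

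\emph{Reduction to cumulants, and the $TSS$ case.} The map $(\kappa_1,\dots,\kappa_q)\mapsto(\mu_1,\dots,\mu_q)$ given by the Bell‑polynomial relations preceding \eqref{eq:functiong} has $\partial\mu_m/\partial\kappa_m=1$ and $\partial\mu_m/\partial\kappa_j=0$ for $j>m$, so its Jacobian is lower triangular with unit diagonal, hence invertible; therefore $\operatorname{rank}G(\theta_0)=\operatorname{rank}J_\kappa(\theta_0)$ with $J_\kappa:=(\partial\kappa_m/\partial\theta_j)_{m,j}$, and it remains to show $\det J_\kappa\neq0$. For the $TSS$, differentiating \eqref{eq:cumsTSS} gives $\partial\kappa_m/\partial\delta=\kappa_m/\delta$, $\partial\kappa_m/\partial\lambda=(\alpha-m)\kappa_m/\lambda$ and $\partial\kappa_m/\partial\alpha=\kappa_m(\log\lambda-\psi(m-\alpha))$ with $\psi=\Gamma'/\Gamma$; factoring the strictly positive $\kappa_m$ out of each row and $1/\delta,1/\lambda$ out of the last two columns, $\det J_\kappa$ becomes a nonzero multiple of the $3\times3$ determinant with rows $\big(\log\lambda-\psi(m-\alpha),\,1,\,\alpha-m\big)$, $m=1,2,3$. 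Subtracting the first row from the other two and using $\psi(x+1)=\psi(x)+1/x$, this determinant equals $-\big((1-\alpha)(2-\alpha)\big)^{-1}$, which is nonzero for every $\alpha\in[\varepsilon,1-\varepsilon]$, so $J_\kappa(\theta_0)$ is nonsingular and part~(a) follows.

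\emph{The $CTS$ case.} Here $\kappa_1=\mu$ makes the first row of $J_\kappa$ equal to $(1,0,\dots,0)$, so it remains to show that the $5\times5$ Jacobian of $(\kappa_2,\dots,\kappa_6)$ with respect to $(\alpha,\delta_+,\delta_-,\lambda_+,\lambda_-)$ is nonsingular. Write $\kappa_m=T_m^++(-1)^mT_m^-$ with $T_m^\pm:=\Gamma(m-\alpha)\delta_\pm\lambda_\pm^{\alpha-m}>0$; a direct computation gives $\dot T_m^\pm/T_m^\pm=a^\pm+(\alpha-m)\,b^\pm-c\,\psi(m-\alpha)$ in the transformed tangent coordinates $a^\pm:=\dot\delta_\pm/\delta_\pm+\log\lambda_\pm\,\dot\alpha$, $b^\pm:=\dot\lambda_\pm/\lambda_\pm$, $c:=\dot\alpha$, the single $c$ encoding that $\alpha$ is shared by both sides. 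Since $(\dot\alpha,\dot\delta_+,\dot\delta_-,\dot\lambda_+,\dot\lambda_-)\mapsto(a^+,b^+,a^-,b^-,c)$ is a linear bijection, it is enough that the $5\times5$ matrix with rows $\big(T_m^+,\,(\alpha-m)T_m^+,\,(-1)^mT_m^-,\,(\alpha-m)(-1)^mT_m^-,\,-\psi(m-\alpha)(T_m^++(-1)^mT_m^-)\big)$, $m=2,\dots,6$, be nonsingular. I would expand this determinant multilinearly in the last column, separating its $T^+$ and $T^-$ parts, and then use $\psi(m-\alpha)=\psi(2-\alpha)+\sum_{j=2}^{m-1}(j-\alpha)^{-1}$ to cancel the $\psi(2-\alpha)$ contribution against the first and third columns; what remains reduces to generalized Vandermonde determinants in $\lambda_+$ and $\lambda_-$ weighted by the rationals $(j-\alpha)^{-1}$, which one shows combine into a rational function of $(\alpha,\delta_\pm,\lambda_\pm)$ that does not vanish on $[\varepsilon,2-\varepsilon]\times[\varepsilon,M]^4$.

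\emph{Main obstacle.} Every step above except the final determinant is routine; the crux is showing that the $5\times5$ $CTS$ cumulant determinant never vanishes, and in particular that it stays nonzero at the non‑generic configurations where the row‑by‑row factoring used for the $TSS$ breaks down because some cumulants themselves vanish — e.g.\ the symmetric case $\delta_+=\delta_-$, $\lambda_+=\lambda_-$, in which $\kappa_3=\kappa_5=0$. There one checks that the determinant splits into a $2\times2$ block (rows $m=3,5$) forcing $\dot\lambda_+=\dot\lambda_-$ and $\dot\delta_+=\dot\delta_-$, and a $3\times3$ block (rows $m=2,4,6$) in $(a^++a^-,\,b^++b^-,\,c)$ whose determinant is, up to a nonzero constant, the sum of the strictly positive terms $\big((4-\alpha)(2-\alpha)\big)^{-1}$ and $\big((5-\alpha)(3-\alpha)\big)^{-1}$, hence nonzero; the general case is covered by the multilinear expansion. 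An alternative, should the direct computation prove unwieldy, would be to choose a non‑consecutive set of six moment conditions for which the Jacobian factors more transparently.
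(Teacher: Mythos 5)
Your overall strategy coincides with the paper's: both reduce local identification to the full-rank condition on the Jacobian of the moment conditions at $\theta_0$ (Rothenberg's criterion), and both exploit that the Bell-polynomial passage from cumulants to moments is unit-lower-triangular so that only the cumulant Jacobian matters. For part (a) your execution is not only correct but sharper than the paper's. The paper writes out the moment derivatives explicitly, obtains the determinant
$\delta^2\lambda^{3\alpha-7}\Gamma(1-\alpha)\Gamma(2-\alpha)\Gamma(3-\alpha)\bigl(\psi(1-\alpha)-2\psi(2-\alpha)+\psi(3-\alpha)\bigr)$,
and then verifies non-vanishing by inspecting a Mathematica plot. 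Your row/column factorization plus the recurrence $\psi(x+1)=\psi(x)+1/x$ gives the digamma combination in closed form — indeed $\psi(1-\alpha)-2\psi(2-\alpha)+\psi(3-\alpha)=-\bigl((1-\alpha)(2-\alpha)\bigr)^{-1}$ — so the two determinants agree up to the positive row/column factors, and your argument replaces the paper's graphical check with an exact one. That is a genuine improvement.

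For part (b), however, your argument is a sketch rather than a proof: the non-vanishing of the $5\times5$ cumulant determinant on all of $[\varepsilon,2-\varepsilon]\times[\varepsilon,M]^4$ is precisely the content of the claim, and your proposal stops at ``one shows [the Vandermonde-type pieces] combine into a rational function \dots that does not vanish'' without producing that function or verifying positivity of its terms outside the symmetric configuration you analyze. You are candid that this is the main obstacle, and you correctly identify the dangerous degenerate cases (e.g.\ $\delta_+=\delta_-$, $\lambda_+=\lambda_-$, where the odd cumulants vanish and the naive row-factoring breaks down); but the block-splitting argument you give there relies on the determinant decoupling into a $2\times2$ and a $3\times3$ block, which you assert rather than derive, and the general (non-symmetric) case is left entirely to the unexecuted multilinear expansion. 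To be fair, the paper itself disposes of part (b) with the single sentence ``The proof works analogously,'' so your proposal is no less complete than the published argument — but as a standalone proof of (b) it has a real gap at the decisive determinant computation.
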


Local identification is necessary but not sufficient for the consistency, see Section 2.2.3 of \cite{Newey1994}.


\section{Monte Carlo study}\label{sec:MCstudy}

In this section, we compare empirical properties of the proposed estimators in a simulation study. In order to do so, we simulate $n=100$ and $n=1000$ random numbers distributed according to the distributions $TSS(0.5, 1,1)$, $CTS(1.5, 1,1,1,1,0)$, and $NTS(0.5,0,1,1,0)$. We estimate the parameters using the MLE, the GMM method according to \cite{Feuerverger1981}, and the CGMM method according to \cite{Carrasco2017}. For the TSS and CTS distributions, we additionally compute the GMC estimator using three different numbers of moment conditions, i.e., the just-identified case (3 moment conditions for TSS, 6 moment conditions for CTS) and two overidentified cases (4 and 5 moment conditions for TSS, 7 and 8 moment conditions for CTS). For all optimization problems we use the \texttt{optim} function in R with the L-BFGS-B method by \cite{Byrd1995}, which is suitable for box-constraint optimization. We evaluate the density functions with the FFT method for the CTS and NTS distributions using the \texttt{fft} function in R. For the TSS distribution, we employ the relation with the stable distribution \eqref{eq:dTSS} and the \texttt{stabledist} package \cite[]{stabledist} package. We use the uniform distribution on $(0,1)$ for $\pi$.\footnote{As explained above, the uniform distribution does not fulfill Assumption \ref{ass:CGMM}.(ii) but the assumptions of \cite{Carrasco2000}. For comparison, we therefore have also used the normal distribution, but the results are robust to this choice and hence not reported.} For the GMC estimator, we compute a first step estimator $\hat{\theta}_{(1)}$ using the identity matrix as the weighting matrix. With this, we compute $\hat{\Omega}=n^{-1}\sum_{j=1}^n g(X_j;\hat{\theta}_{(1)})g(X_j;\hat{\theta}_{(1)})^{\prime}$ and next the asymptotically efficient GMC estimator with the weighting matrix $\hat{\Omega}^{-1}$. As mentioned in Section \ref{subsec:GMM}, we use the Tikhonov regularization \eqref{eq:operatorKest} for the inversion of $K$ to compute the CGMM estimator. To avoid problems with numerical inversion of the GMM and the GMC estimators, we also use a regularization to invert the respective matrices $\hat\Omega$. We choose a cut-off regularization \cite[see][]{Carrasco2000} for GMM\footnote{We also tried the LF and the Tikhonov regularization with no qualitative difference.} and a Tikhonov regularization \eqref{eq:operatorKest} for GMC. For all setups, we choose a regularization parameter of 0.01. However, we note that for the regularization parameter fine-tuning is possible to obtain more precise estimation results. \cite{Juessen2023}, in his Master's thesis, reports estimation results for $\gamma_n=0.1$, finding an improvement of the accuracy. For the GMM, we use an equally spaced grid with grid size $R=10$ (for TSS and NTS) or $R=20$ for (CTS). The grid is determined as in \cite{Kharrat2016} by taking $\varepsilon$ as the smallest and the first root of the real part of the empirical characteristic function as the largest values of the grid. Other suggestions for grids can be found in \cite{Kharrat2016}. We set $\varepsilon=10^{-6}$. Although $M$ theoretically needs to be finite, we made good experiences with the choice of $\verb|Inf|$ as upper bounds in R. The implemented routines are in the \texttt{TempStable} R package \cite[]{TempStable}. We repeat the experiments in 10,000 independent Monte Carlo replications.

Table \ref{tab:TSS} shows empirical bias and the empirical root mean squared errors (RMSE) (in parenthesis) for the TSS distribution for each of the parameters. In the last column, we display the average runtime for estimation in seconds. The striking difference is that while the GMM and the GMC estimation perform in less than a second MLE and CGMM need considerably more time to compute. As expected, the estimates are more precise for a larger sample. Importantly, all methods (except the MLE) suffer from a small sample size which implies that the algorithms run into boundary solutions, i.e., finding stability parameters close to zero (boundary solutions for the other parameters rarely occur). This implies a negative empirical bias for $\alpha$. The MLE outperforms the other methods followed by CGMM. The GMC with 4 moment conditions also works fairly well, adding further moment conditions has no additional value.

\begin{table}[htbp]
  \centering
    \begin{tabular}{lrrrrr}
         \toprule 
					& $n$ & $\alpha$ & $\delta$ & $\lambda$ & time \\
    \midrule
		CGMM  & 100   & -0.071 & 0.458 & 0.206 & 43 \\
          &       & (0.22)  & (1.041) & (0.594) &  \\
          & 1000  & -0.011 & 0.057 & 0.031 & 1256 \\
          &       & (0.073) & (0.262) & (0.192) &  \\
    GMC ($p=3$) & 100   & -0.169 & 0.982 & 0.448 & \textbf{0.2} \\
          &       & (0.282) & (1.637) & (0.796) &  \\
          & 1000  & -0.028 & 0.127 & 0.072 & \textbf{0.6} \\
          &       & (0.096) & (0.358) & (0.243) &  \\
    GMC ($p=4$)  & 100   & -0.142 & 0.752 & 0.345 & 0.4 \\
          &       & (0.27)  & (1.388) & (0.687) &  \\
          & 1000  & -0.002 & 0.02  & \textbf{0.005} & 0.8 \\
          &       & (0.068) & (0.226) & (0.177) &  \\
    GMC ($p=5$) & 100   & -0.037 & 0.489 & 0.489 & 0.5 \\
          &       & (0.292) & (1.359) & (0.742) &  \\
          & 1000  & -0.033 & 0.139 & 0.099 & 1 \\
          &       & (0.089) & (0.347) & (0.215) &  \\
    GMM & 100   & -0.103 & 0.706 & 0.292 & 0.6 \\
          &       & (0.261) & (1.488) & (0.749) &  \\
          & 1000  & -0.013 & 0.063 & 0.034 & 0.6 \\
          &       & (0.077) & (0.275) & (0.198) &  \\
    MLE & 100   & \textbf{-0.004} & \textbf{0.098} & \textbf{0.056} & 82 \\
          &       & (\textbf{0.117}) & (\textbf{0.518}) & (\textbf{0.409}) &  \\
          & 1000  & \textbf{-0.001} & \textbf{0.013} & 0.01  & 802 \\
          &       & (\textbf{0.038}) & (\textbf{0.137}) & (\textbf{0.125}) &  \\
    \bottomrule
		\end{tabular}%
		  \caption{Empirical bias and RMSE (in parenthesis) for the parameters of the TSS distribution for different estimation methods. Last column shows average runtime in seconds. Smallest values in bold.}
  \label{tab:TSS}%
\end{table}%

Table \ref{tab:CTS} shows similar experimental results for the CTS distribution. Again, we report the empirical bias and the RMSE for each of the parameters. As expected, estimating 6 parameters is more demanding than estimating 3 as above. We see that for the GMC and GMM methods extremely high values of bias and RMSEs occur, which is due to rare extremely high parameter estimates. Therefore, we also compute the median absolute deviation (MAD) from the true parameters, printed in square brackets. Generally, we observe that all methods fail to provide good estimates for 100 observations. In this case, the optimization algorithms find boundary solutions for many of the randomly drawn sets. Thus, the CTS distribution should only be used as a model if the sample size is not too small. The GMC and GMM methods also fail for 1000 observations for most of the Monte Carlo replications. For 1000 observations, the MLE performs better than the latter methods. However, it still has difficulties to estimate the stability index correctly in some instances. The CGMM method has the longest runtime but works fairly well especially compared with the other estimators. Boundary estimates rarely occur.

\begin{table}[htbp]
  \centering
    \begin{tabular}{lrrrrrrrr}
                   \toprule 
					& $n$ & $\alpha$ & $\delta_+$ & $\delta_-$ & $\lambda_+$ & $\lambda_-$ & $\mu$ & time \\
					\midrule
    CGMM  & 100   & -0.518 & \textbf{2.207} & \textbf{2.377} & 2.918 & 2.76  & 0.001 & 143 \\
          &       & (0.778) & (\textbf{6.096}) & (\textbf{6.369}) & (\textbf{6.398}) & (\textbf{6.091}) & (0.184) &  \\
          &       & [\textbf{0.427}] & [0.999999] & [0.999999] & [\textbf{1.114}] & [1.117] & [0.124] &  \\
          & 1000  & \textbf{-0.05} & \textbf{0.607} & \textbf{0.671} & 1.841 & 1.781 & 0     & 5443 \\
          &       & (\textbf{0.157}) & (\textbf{2.028}) & (\textbf{2.01})  & (3.949) & (3.88)  & (0.059) &  \\
          &       & [\textbf{0.074}] & [\textbf{0.788}] & [\textbf{0.8}]   & [\textbf{0.626}] & [\textbf{0.624}] & [0.039] &  \\
GMC (p=6) & 100   & -0.903 & $1.48\mathrm{E}5$ & $1.14\mathrm{E}5$ & $1.55\mathrm{E}8$ & $2.12\mathrm{E}8$ & \textbf{0}     & \textbf{1} \\
          &       & (1.206) & ($4.92\mathrm{E}6$) & ($4.96\mathrm{E}6$) & ($9.84\mathrm{E}9$) & ($2\mathrm{E}10$) & (0.221) &  \\
          &       & [1.49999] & [1.432] & [1.571] & [1.825] & [1.82]  & [0.145] &  \\
          & 1000  & -0.865 & $2.79\mathrm{E}5$ & $6.11\mathrm{E}5$ & $3.16\mathrm{E}8$ & $7.82\mathrm{E}8$ & 0     & \textbf{3} \\
          &       & (1.113) & ($9.47\mathrm{E}6$) & ($2.34\mathrm{E}7$) & ($2.17\mathrm{E}7$) & ($7.4\mathrm{E}4$) & (0.06)  &  \\
          &       & [1.175] & [3.521] & [3.959] & [1.584] & [1.636] & [0.039] &  \\
GMC (p=7) & 100   & \textbf{0.281} & 275.6 & 361.6 & $2.65\mathrm{E}5$ & 5918  & -0.001 & 3 \\
          &       & (\textbf{0.563}) & (9876)  & ($1.24\mathrm{E}4$) & ($2.5\mathrm{E}7$) & ($2.86\mathrm{E}5$) & (0.482) &  \\
          &       & [0.499] & [\textbf{0.9995}] & [\textbf{0.99995}] & [0.92]  & [0.918] & [0.166] &  \\
          & 1000  & 0.158 & $1.5\mathrm{E}4$ & $1.26\mathrm{E}5$ & $5.27\mathrm{E}8$ & $9.41\mathrm{E}7$ & 0.001 & 4 \\
          &       & (0.586) & ($8.9\mathrm{E}5$) & ($1.1\mathrm{E}7$) & ($3.4\mathrm{E}10$) & ($5.9\mathrm{E}9$) & (0.07)  &  \\
          &       & [0.479] & [0.99998] & [0.99998] & [0.888] & [0.861] & [0.043] &  \\
GMC (p=8) & 100   & -0.313 & 3044  & 2302  & $3.18\mathrm{E}5$ & $1.14\mathrm{E}6$ & 0.002 & 2 \\
          &       & (0.887) & ($2.13\mathrm{E}5$) & ($1.6\mathrm{E}5$) & ($1.67\mathrm{E}7$) & ($7.1\mathrm{E}7$) & (\textbf{0.109}) &  \\
          &       & [0.499995] & [0.99999] & [0.99999] & [0.964] & [0.987] & [0.19]  &  \\
          & 1000  & -0.26 & 1425  & 9768  & $2.47\mathrm{E}5$ & $5.63\mathrm{E}5$ & -0.001 & 4 \\
          &       & (0.83)  & ($9.04\mathrm{E}4$) & ($8.61\mathrm{E}5$) & ($2.1\mathrm{E}7$) & ($5.1\mathrm{E}7$) & (0.208) &  \\
          &       & [0.499998] & [0.999999] & [0.999999] & [1.135] & [1.202] & [0.051] &  \\
GMM			  & 100   & -1.207 & 19.7  & 20.31 & 18.3  & 17.7  & -0.001 & 5 \\
          &       & (1.318) & (56.63) & (59.25) & (120.9) & (92.04) & (0.256) &  \\
          &       & [1.499999] & [1.91]  & [2.129] & [2.263] & [2.2]   & [0.128] &  \\
          & 1000  & -0.802 & 13.16 & 13.57 & 7.828 & 8.015 & 0     & 13 \\
          &       & (1.021) & (28.13) & (28.46) & (23.88) & (24.91) & (0.074) &  \\
          &       & [0.816] & [3.755] & [4.222] & [2.724] & [2.728] & [0.043] &  \\
   MLE		  & 100   & -0.856 & 5.69  & 6.02  & \textbf{1.695} & \textbf{2.204} & 0.002 & 390 \\
          &       & (1.131) & (13.01) & (13.94) & (10.75) & (18.71) & (0.183) &  \\
          &       & [1.1239] & [0.999999] & [0.999999] & [10.013] & [\textbf{1.045}] & [\textbf{0.123}] &  \\
          & 1000  & -0.292 & 2.6   & 2.775 & \textbf{0.809} & \textbf{0.842} & \textbf{0}     & 3919 \\
          &       & (0.716) & (6.638) & (6.877) & (\textbf{1.382}) & (\textbf{1.409}) & (\textbf{0.058}) &  \\
          &       & [0.399] & [0.97]  & [0.975] & [0.819] & [0.827] & [\textbf{0.039}] &  \\
        \bottomrule
		\end{tabular}%
    \caption{Empirical bias, RMSE (in parenthesis) and MAD [in square brackets] for the parameters of the CTS distribution for different estimation methods. Last column shows average runtime in seconds. Smallest values in bold.}
	\label{tab:CTS}%
\end{table}%

Table \ref{tab:NTS} presents the results for the NTS distribution. For this distribution, we only use the CGMM, the GMM, and the ML methods. As before, a larger sample is beneficial since small samples lead to boundary estimates. For example, the bias and RMSE for $\beta$ are very large for the GMM method because of large outliers. As for the CTS above, the CGMM method seems to perform better than MLE and much better than the GMM method, which performs poorly even for a larger sample.

\begin{table}[htbp]
  \centering
    \begin{tabular}{lrrrrrrr}
          \toprule
					& $n$ & $\alpha$ & $\beta$ & $\delta$ & $\lambda$ & $\mu$ & time \\
    \midrule
		CGMM & 100   & 0.084 & -0.033 & \textbf{0.128} & 2.515 & 0.044 & 75 \\
          &       & (\textbf{0.278}) & (\textbf{2.535}) & (\textbf{0.938}) & (\textbf{5.641}) & (3.146) &  \\
          & 1000  & \textbf{0.025} & \textbf{0.001} & \textbf{-0.039} & \textbf{0.081} & \textbf{-0.0004} & 1653 \\
          &       & (\textbf{0.121}) & (0.21)  & (\textbf{0.375}) & (\textbf{0.73})  & (0.333) &  \\
    GMM & 100   & -0.063 & $-2.38\mathrm{E}6$ & 1.891 & 15.606 & 0.231 & \textbf{3} \\
          &       & (0.412) & ($2.38\mathrm{E}8$) & (19.062) & (169.6) & (8.757) &  \\
          & 1000  & -0.14 & -0.002 & 1.572 & 0.972 & -0.037 & \textbf{5} \\
          &       & (0.417) & (5.44)  & (2.985) & (3.012) & (6.719) &  \\
    MLE & 100   & \textbf{0.011} & \textbf{-0.007} & 0.334 & \textbf{1.873} & \textbf{0.021} & 185 \\
          &       & (0.403) & (3.952) & (3.716) & (7.831) & (\textbf{1.744}) &  \\
          & 1000  & -0.105 & 0.002 & 1.0377 & 0.129 & -0.003 & 2063 \\
          &       & (0.366) & (\textbf{0.172}) & (2.101) & (1.173) & (\textbf{0.279}) &  \\
    \bottomrule
		\end{tabular}%
		  \caption{Empirical bias and RMSE (in parenthesis) for the parameters of the NTS distribution for different estimation methods. Last column shows average runtime in seconds. Smallest values in bold.}
  \label{tab:NTS}%
\end{table}%

Next, we test the asymptotic normality of the parameters by computing the coverage of asymptotic confidence intervals based on the inverse of the Fisher information matrix. Unfortunately, the Fisher information matrix is cumbersome to compute for the CTS and NTS distributions because they rely on several numerical algorithms (FFT, numerical derivation, and numerical integration) such that the result is numerically unstable. For the TSS distribution, there exists the relation to the stable distribution \eqref{eq:dTSS} and series representations like \eqref{eq:dTSSseries}. Thus, we restrict ourselves to the analysis for the TSS distribution. Table \ref{tab:TSScov} reports the coverage (in \%) of asymptotic confidence intervals with a nominal confidence level of 95\% for the three different parameters. To compute these, we have used the very same setup and parameter estimation results as for Table \ref{tab:TSS} above. As for the comparison of the bias and RMSE, the MLE outperforms the other methods but this time very clearly. The reason is that the bias is decisively smaller for the MLE method so that for the other methods the point estimates and thus the confidence intervals are simply too far away from the true values. We note that because we use asymptotic confidence intervals the widths for the different methods are similar. This leads to a smaller coverage which is far from the nominal level and we do not observe an improvement with increasing sample size in all of the cases. On the other hand, the MLE coverage rate is close to the nominal confidence level for large sample sizes. It is not fully clear why there is a higher estimation variance and bias of the CGMM than of the MLE for the TSS distribution. One explanation might be due to the nature of the TSS distribution which is only on $\mathbb{R}_+$. It is likely that more fine-tuning for the many estimation parameters is needed to obtain more precise results. Especially, there may be room for improvement by using an optimal regularization parameter instead of the ad-hoc rule.

\begin{table}[htbp]
  \centering
    \begin{tabular}{lrrrr}
         \toprule 
					& $n$ & $\alpha$ & $\delta$ & $\lambda$  \\
    \midrule
		CGMM  & 100   & 68.19 & 67.54 & 72.64 \\
          & 1000  & 62.59 & 68.94 & 73.71 \\
    GMC ($p=3$) & 100   & 63.12 & 62.71& 65.7 \\
          & 1000  & 52.56 & 59.68 & 64.95 \\
    GMC ($p=4$)  & 100   & 63.35 & 63.25 & 70.51 \\
          & 1000  & 66.95 & 71.84  & 76.79 \\
    GMC ($p=5$) & 100   & 66.4 & 67.58 & 71.86 \\
          & 1000  & 65.37 & 73.7 & 80.29 \\
    GMM & 100   & 59.3&58.64&63.77 \\
          & 1000  &  60.66&67.35&72.88\\
    MLE & 100   &  \textbf{83.28} & \textbf{81.34} & \textbf{85.94}  \\
          & 1000  &  \textbf{88.6} & \textbf{89.07} & \textbf{89.45}  \\
    \bottomrule
		\end{tabular}%
		  \caption{Confidence interval coverage (in \%) for the parameters of the TSS distribution for different estimation methods with a confidence level of 95\%. Largest values in bold.}
  \label{tab:TSScov}%
\end{table}%

We have also tried other parameter constellations for $\theta$ without any qualitative difference except for $\alpha$ when it is close to the boundary. We omit the details here. To conclude, the CGMM works well for all three examples and is the only reliable estimator for the CTS distribution. Unfortunately, the runtime is quite lengthy as well as for the MLE, which finds boundary solutions more frequently. The GMM and GMC estimators are only reasonable for the TSS subordinator. Therefore, we recommend using the CGMM estimator for the CTS and NTS distribution.

\section{Applications}\label{sec:application}

We discuss financial applications to motivate the use of tempered stable distributions. Tempered stable distributions have already been proposed to model log-returns of financial assets, see e.g., \cite{CGMY2002,Fallahgoul2019}. In the case of electricity markets, e.g., \cite{Sabino2022} models the evolution of electricity spot prices by a tempered stable driven Ornstein-Uhlenbeck process.

Here, we analyze log-returns of three financial assets. We consider the S\&P 500 index (2012-06-01 to 2022-05-31), the German DAX index (2012-06-01 to 2022-05-31), and base-load spot prices from the German power exchange EEX (2018-09-30 to 2022-05-31). We obtain daily data from Refinitiv's Eikon. Before modeling tempered stable distributions we perform some simple manipulations and fit preliminary models to clean the data. In this order, we first deseasonalize the data of the EEX regarding its weekly profile by applying a moving average filter, see \cite{Weron2005,Weron2007}. Second, we exclude the rare cases of negative prices of the EEX. Next, we compute log-returns for the three indices (for the EEX we also omit all log-returns next to days with negative prices). Last, we fit GARCH(1,1) models (with normal errors) using the \texttt{tseries} package \cite[]{Trapletti2020} to each of the log-return series to remove stochastic volatility from the data which can mistakenly interpreted as evidence for heavy tailed distributions \cite[]{Fallahgoul2019}. We then fit the CTS, the NTS, and the stable distributions to the residuals of the GARCH model. With this approach, we follow \cite{Goode2015} who found that this quasi-MLE performs nearly identically as a correctly specified MLE of a GARCH model with CTS or NTS distributed errors.

Figures \ref{fig:SPX} to \ref{fig:EEX} show the original time series (a), the (deseasonalized where appropriate) log-returns (b), and the GARCH residuals (c). We observe in panels (b) that for each of the series of log-returns stochastic volatility is apparent. The GARCH residuals do not exhibit volatility clustering anymore. However, the residuals reveal skewness and heavy tails which is why we next fit tempered stable distributions to them.

\begin{figure*}
	\centering
  \subfloat[][Price process]{\includegraphics[width=.65\textwidth]{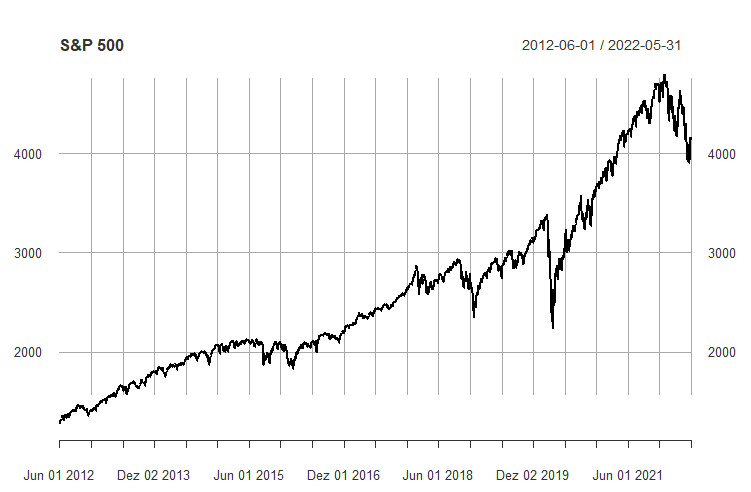}} \par
  
  \subfloat[][Log-returns]{\includegraphics[width=.65\textwidth]{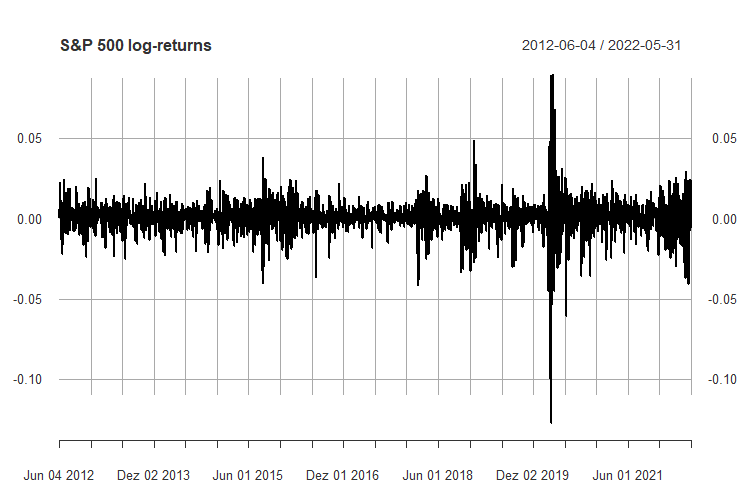}}\par

	\subfloat[][GARCH residuals]{\includegraphics[width=.65\textwidth]{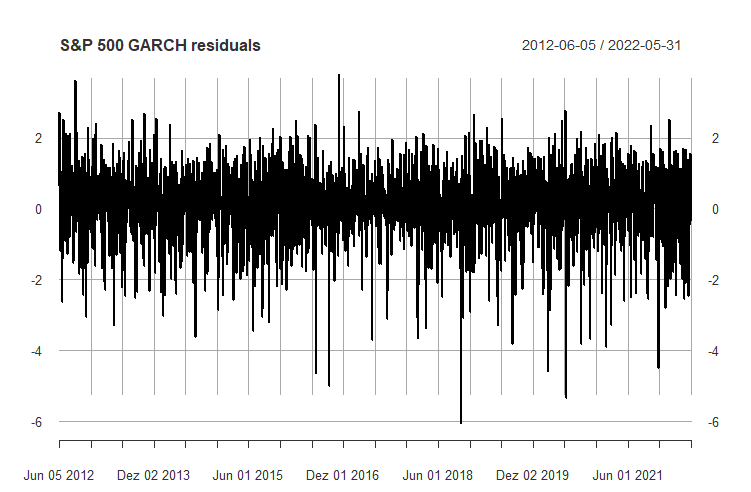}}\par
  
  \caption{Price process (a), log-returns (b) and GARCH residuals (c) for the S\&P 500 index from 2012-06-01 to 2022-05-31.}
  \label{fig:SPX}
\end{figure*}

\begin{figure*}
	\centering
  \subfloat[][Price process]{\includegraphics[width=.65\textwidth]{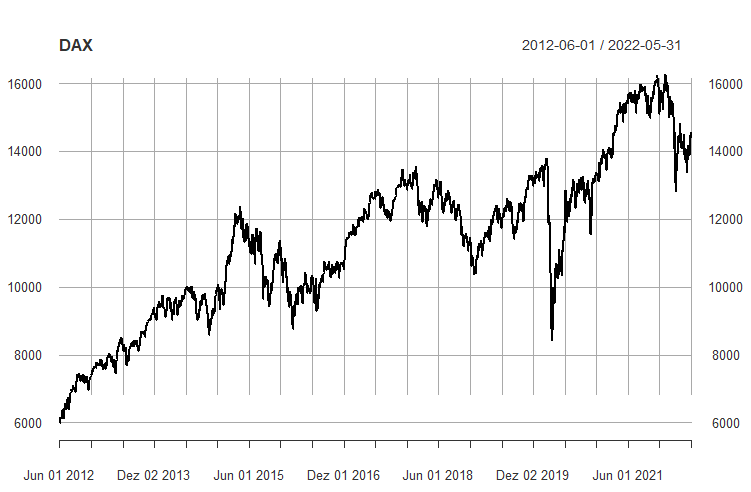}} \par
  
  \subfloat[][Log-returns]{\includegraphics[width=.65\textwidth]{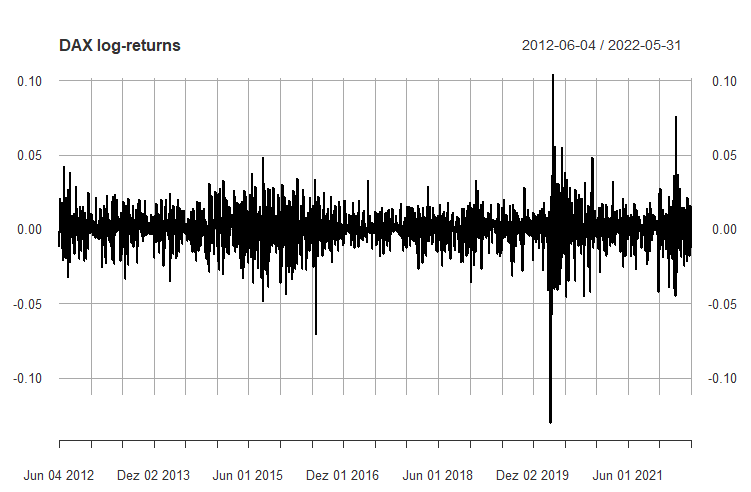}}\par

	\subfloat[][GARCH residuals]{\includegraphics[width=.65\textwidth]{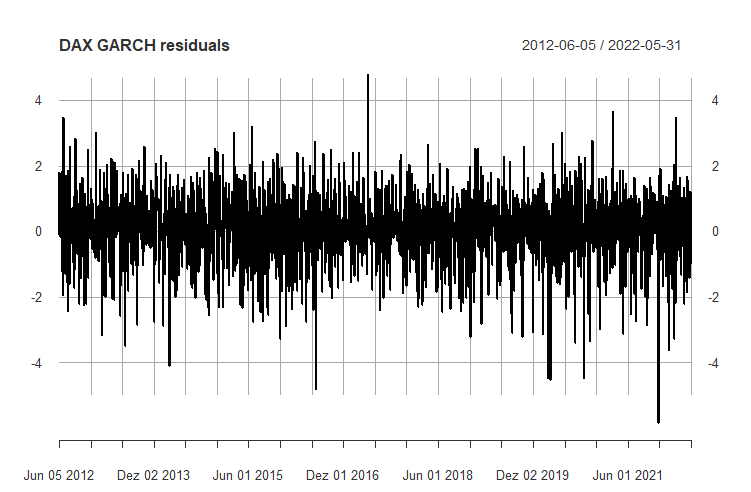}}\par
  
  \caption{Price process (a), log-returns (b) and GARCH residuals (c) for the DAX index from 2012-06-01 to 2022-05-31.}
  \label{fig:DAX}
\end{figure*}

\begin{figure*}
	\centering
  \subfloat[][Price process]{\includegraphics[width=.65\textwidth]{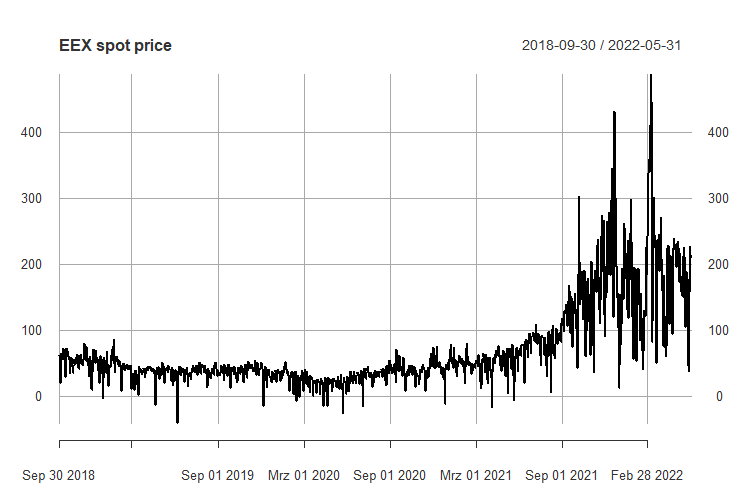}} \par
  
  \subfloat[][Log-returns]{\includegraphics[width=.65\textwidth]{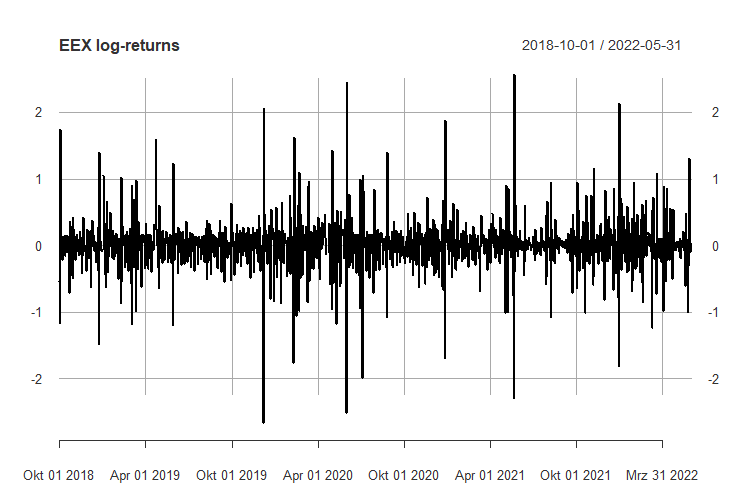}}\par

	\subfloat[][GARCH residuals]{\includegraphics[width=.65\textwidth]{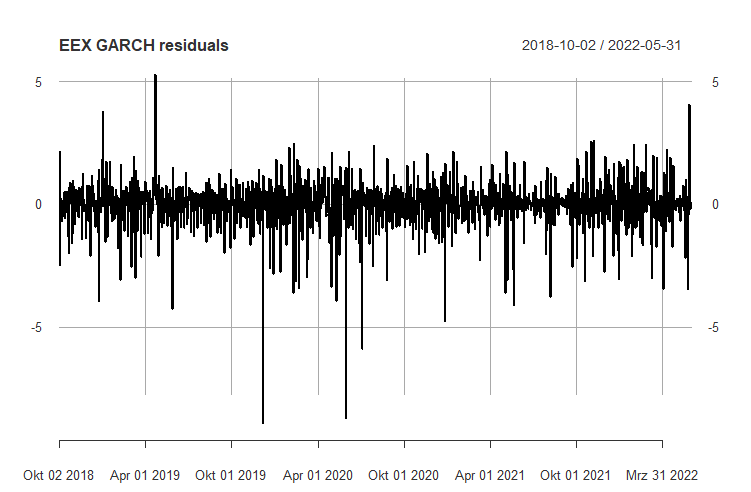}}\par
  
  \caption{Price process (a), deseasonalized log-returns (b) and GARCH residuals (c) for the EEX from 2018-09-30 to 2022-05-31.}
  \label{fig:EEX}
\end{figure*}

We compare the CTS and the NTS distributions with a univariate stable distribution as a baseline model (the TSS distribution is not considered since it only models positive data and here data can be negative). Needless to say, there are plenty of other models for log-returns in the literature, e.g., generalized hyperbolic models (and their subclasses) by \cite{barndorff1977exponentially}, or finite mixture models \cite[]{Massing2021}. For conciseness, we do not present them here but refer to the aforementioned references and \cite{Massing2019b} for a comparison. 

To compare the goodness-of-fit we use the Kolmogorov-Smirnov (KS) and the Anderson-Darling (AD) statistics. Lower statistics indicate a better fit. It is known that the KS distance better reflects the fit around the center of the distribution while the AD statistic concentrates on the tails of the distributions \cite[]{razali2011power}. Of course, the CTS and the NTS distributions have a larger number of parameters hence a better fit is to be expected. Therefore, we also compute the Akaike Information Criterion (AIC) and the Bayesian Information Criterion (BIC) to penalize large models to avoid overfitting. The penalization of larger models is higher for the BIC.

For brevity, we decide to only present estimates and plots obtained with the CGMM estimator. 
Table \ref{tab:estimates} shows parameter estimates for the three time series and the three distributions. Table \ref{tab:distances} presents KS, AD, AIC, and BIC statistics.

\begin{table}[htbp]
  \centering
    \begin{tabular}{lrrrrrrrr}
         \toprule 
					&       & $\alpha$ & $\beta$ & $\delta_+$ & $\delta_-$ & $\lambda_+$ & $\lambda_-$ & $\mu$ \\
					\midrule
    S\&P & Stable & 1.847 &       & 0     & 0.114 &       &       & 0.177 \\
      ($n=2514$)    & CTS   & 0.659 &       & 0.37  & 0.996 & 1.218 & 1.139 & 0.054 \\
          & NTS   & 0.31 & -0.382 & 0.788 &       & 1.185 &       & 0.404 \\
    DAX & Stable & 1.87  &       & 0.009 & 0.091 &       &       & 0.11 \\
      ($n=2527$)   & CTS   & 0.614 &       & 0.513 & 0.984 & 1.255 & 1.212 & 0.025 \\
          & NTS   & 0.224 & -0.231 & 0.952 &       & 1.229 &       & 0.248 \\
    EEX   & Stable & 1.713 &       & 0.023 & 0.123 &       &       & 0.107 \\
    ($n=1322$)      & CTS   & 0.369 &       & 0.514 & 0.378 & 1.24  & 0.7   & -0.032 \\
          & NTS   & 0.702 & -0.251 & 0.155 &       & 0.125 &       & 0.175 \\
					\bottomrule
    \end{tabular}%
		  \caption{Parameter estimates for the stable, the CTS and the NTS distributions fitted to S\&P 500, DAX and EEX GARCH residuals. The numbers in parenthesis are the sample sizes of the GARCH residuals.}
  \label{tab:estimates}%
\end{table}%

\begin{table}[htbp]
  \centering
    \begin{tabular}{lrrrrr}
		\toprule
          &       & KS & AD & AIC & BIC \\
					\midrule
    S\&P & Stable & \textbf{0.028} & \textbf{2.891} & 7027  & \textbf{17075} \\
          & CTS   & 0.034 & 3.41  & 6965  & 22037 \\
          & NTS   & 0.029 & 3.206 & \textbf{6941}  & 19501 \\
    DAX & Stable & 0.033 & 4.682 & 7078  & \textbf{17178} \\
          & CTS   & 0.02  & \textbf{2.321} & 6986  & 22136 \\
          & NTS   & \textbf{0.013} & 3.855 & \textbf{6973}  & 19598 \\
    EEX & Stable & 0.043 & 2.004 & 3442  & \textbf{8722} \\
          & CTS   & 0.031 & 5.135 & 3335  & 11255 \\
          & NTS   & \textbf{0.025} & \textbf{1.961} & \textbf{3322}  & 9922 \\
					\bottomrule
    \end{tabular}%
		  \caption{Goodness-of-fit statistics (lower statistics indicate a better fit) for the stable, the CTS and the NTS distributions fitted to S\&P 500, DAX and EEX GARCH residuals. Smallest values in bold.}
  \label{tab:distances}%
\end{table}%

For the KS and AD statistics, we observe a mixed pattern. For the S\&P 500 the stable distribution is favored while for the DAX and EEX the CTS or NTS distributions have lower values. While in each case the NTS distribution has the lowest AIC, the stable distribution always has the lowest BIC. To visualize goodness-of-fit we moreover depict QQ-plots.
Figures \ref{fig:SPXqq}--\ref{fig:EEXqq} plot sample quantiles versus theoretical quantiles for the different scenarios. The solid line is the reference line. We observe that, although in some cases the stable distribution has lower KS or AD statistics, the QQ-plots suggest that the tails of the stable distribution are too heavy. CTS and NTS distributions seem to provide a better fit.

\begin{figure*}
	\centering
  \subfloat[][]{\includegraphics[width=.65\textwidth]{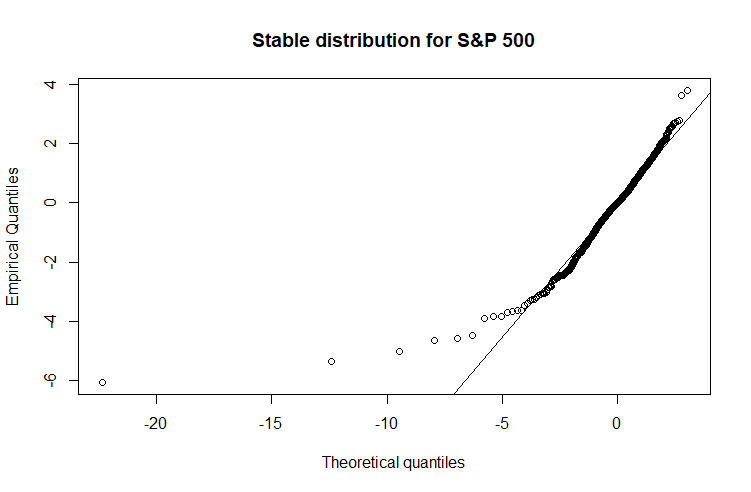}} \par
  
  \subfloat[][]{\includegraphics[width=.65\textwidth]{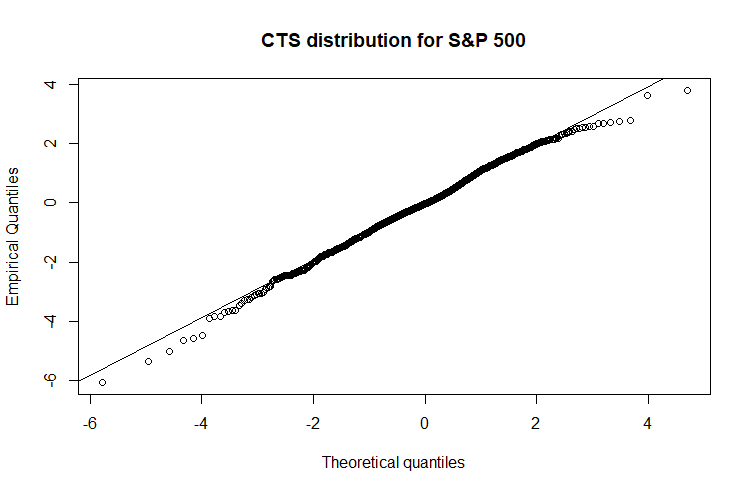}}\par

	\subfloat[][]{\includegraphics[width=.65\textwidth]{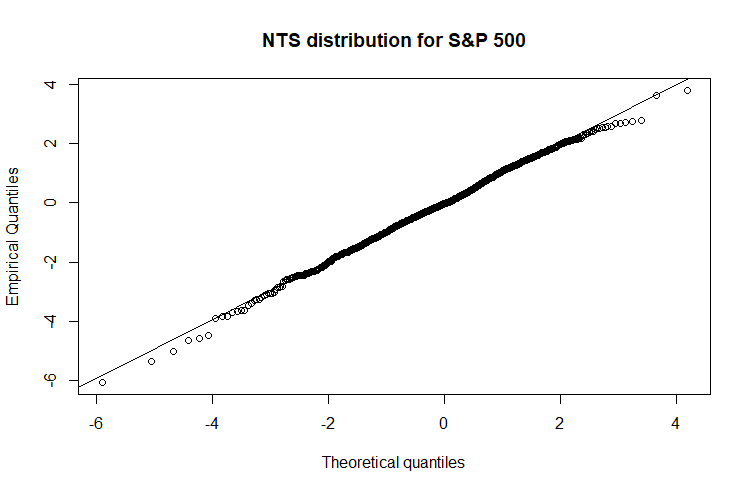}}\par
  
  \caption{QQ-Plots comparing the goodness-of-fit of the stable, the CTS and the NTS distributions for the S\&P 500. The solid line is the reference line.}
  \label{fig:SPXqq}
\end{figure*}

\begin{figure*}
	\centering
  \subfloat[][]{\includegraphics[width=.65\textwidth]{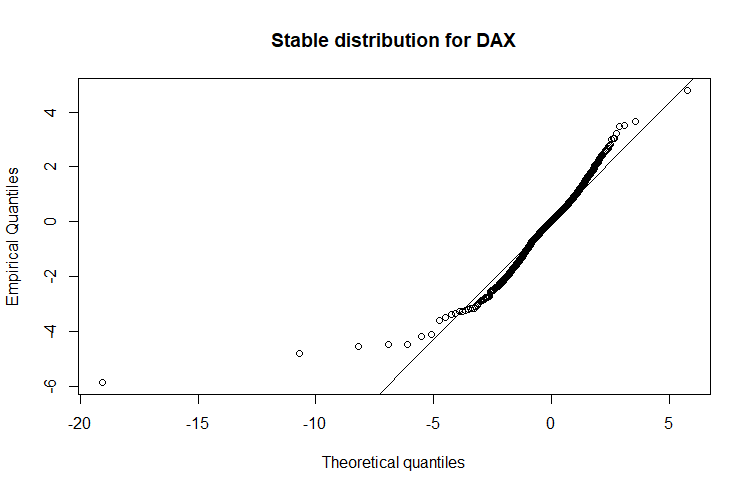}} \par
  
  \subfloat[][]{\includegraphics[width=.65\textwidth]{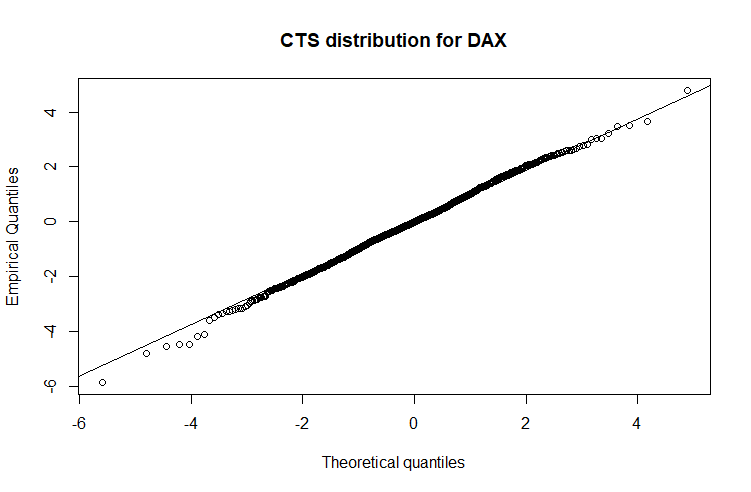}}\par

	\subfloat[][]{\includegraphics[width=.65\textwidth]{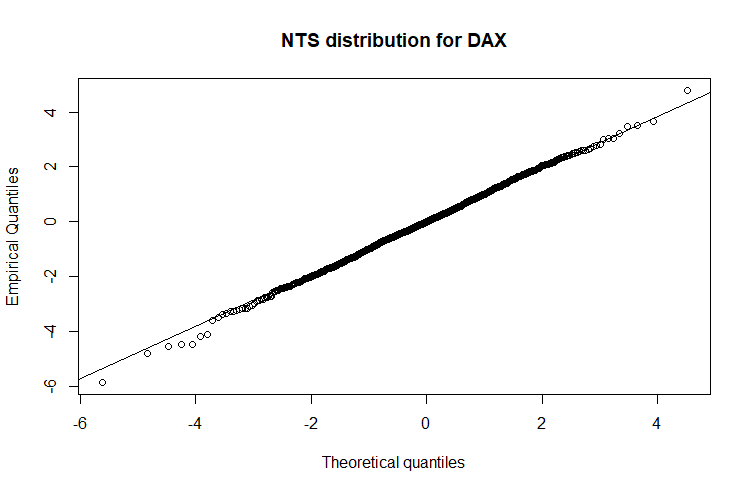}}\par
  
  \caption{QQ-Plots comparing the goodness-of-fit of the stable, the CTS and the NTS distributions for the DAX. The solid line is the reference line.}
  \label{fig:DAXqq}
\end{figure*}

\begin{figure*}
	\centering
  \subfloat[][]{\includegraphics[width=.65\textwidth]{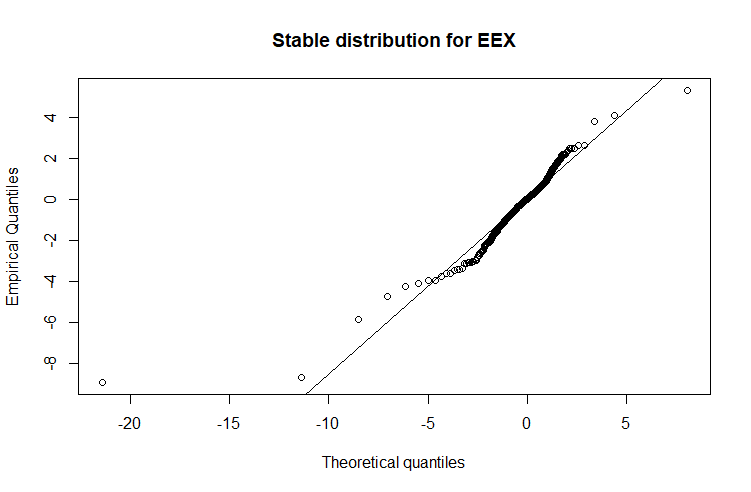}} \par
  
  \subfloat[][]{\includegraphics[width=.65\textwidth]{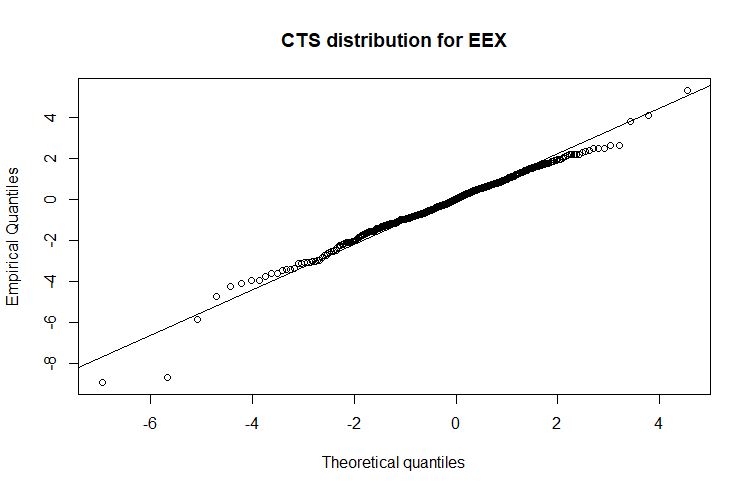}}\par

	\subfloat[][]{\includegraphics[width=.65\textwidth]{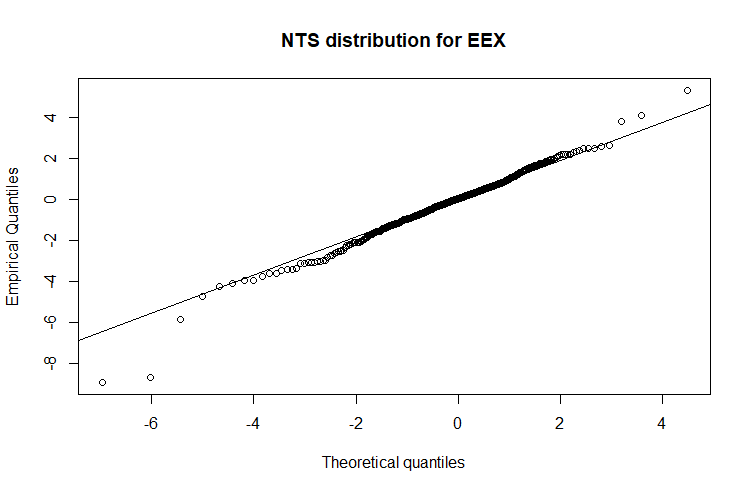}}\par
  
  \caption{QQ-Plots comparing the goodness-of-fit of the stable, the CTS and the NTS distributions for the EEX. The solid line is the reference line.}
  \label{fig:EEXqq}
\end{figure*}

\section{Conclusions and Future Work}\label{sec:conclusion}
This paper derived asymptotic efficiency results for parametric estimation methods of the tempered stable subordinator, the classical tempered stable distribution, and the normal tempered stable distribution. We conducted a Monte Carlo study to establish finite sample properties. It turned out that the generalized methods of moments estimator with a continuum of moment conditions and the maximum likelihood estimator outperformed the other methods. We discussed why tempered stable distributions are relevant in financial applications.
Asymptotic results for other tempering functions or the derivation of a set of conditions to ensure asymptotic efficiency for general tempering functions are a subject of future work. 

\section*{Acknowledgements}
Financial support of the German Research Foundation (Deutsche Forschungsgemeinschaft, DFG) via the project 455257011 is gratefully acknowledged.

The author is grateful to Christoph Hanck for valuable comments which helped to substantially improve this paper. Full responsibility is taken for
all remaining errors.

\bibliography{bibliography}
\bibliographystyle{agsm}

\appendix

\section{Appendix: Proofs}\label{app:proofs}

\begin{proof}[Proof of Theorem \ref{thm:ML}]
\begin{enumerate}
\item[(a)] We now prove the asymptotic normality of the TSS by verifying the conditions (i)--(vi) of Assumption \ref{ass:ML}. In order to do so, we follow \cite{DuMouchel1973} who proved asymptotic normality for stable distributions. He showed the statement for stable distributions which are not totally skewed. More precisely, he explicitly excluded the subordinator case in his paper. However, we do not contradict his statements here. This is because (in our notation) the skewness parameter $\rho$ is fixed at $\rho=1$ (i.e., total positive skewness) and does not need to be estimated in our setting. (In his notation, the skewness parameter $\rho$ fulfills $|\rho|<\min\{\alpha,2-\alpha\}$, i.e., total positive skewness means that $\rho$ is fixed at $\rho=\alpha$.) In the setting of \cite{DuMouchel1973}, the skewness parameter $\rho$ still needed to be estimated, for which the usual asymptotics do not work. In our setting, the proof for stable subordinators follows analogously by checking Conditions 1-6 of \cite{DuMouchel1973} with minor changes summarized now. We only estimate the stability index and the scale parameter here so the Fisher information is a $2\times2$-matrix. Conditions 1,2,4-6 of \cite{DuMouchel1973} follow analogously. For his Condition 3, we refer to (vi) below where we prove the statement for the TSS and for the stable subordinator as an intermediate step.

We now check the conditions of Assumption \ref{ass:ML}. \citet[][Section 3.2.1]{Grabchak2016} proved condition (i). (ii) $\Theta$ is compact by construction. The density function \eqref{eq:dTSS} for the TSS distribution is twice continuously differentiable with respect to the parameters by the twice continuous differentiability of the density of the stable subordinator \cite[][Condition 1]{DuMouchel1973} and the continuous differentiability of the gamma function for $\alpha>0$. Also $f_{TSS}(y;\theta)>0$ for all $y>0$. This implies (iii). 

To show (iv), we make use of \eqref{eq:dTSS} and bound the partial derivatives
\begin{equation}\label{eq:boundTSSderiv}
\left|\frac{\partial f_{TSS}(y;\theta)}{\partial\vartheta}\right|\le C_{\theta}\cdot\begin{cases} f_{TSS}(y;\theta)+\big|\frac{\partial f_{S(\alpha,\delta)}(y)}{\partial\alpha}\big|,& \vartheta=\alpha,\\ f_{TSS}(y;\theta)+\big|\frac{\partial f_{S(\alpha,\delta)}(y)}{\partial\delta}\big|,&\vartheta=\delta,\\ f_{TSS}(y;\theta) +yf_{TSS}(y;\theta), &\vartheta=\lambda,
\end{cases}
\end{equation}
where $C_{\theta}$ is a positive, finite constant which depends on $\theta$ but not on $y$. Because $\theta\in\Theta$ which is compact and because $C_{\theta}$ can be taken to be continuous in $\theta$ which follows from (iii), $C_{\theta}$ can be bounded by a constant $C$. The result now follows because the expected value of a TSS distribution exists and because the derivatives of stable densities are integrable as discussed in the proof of Condition 4 in \cite{DuMouchel1973}. Integrability also holds when taking the supremum as can be verified analogously to Condition 3 in \cite{DuMouchel1973} by analyzing the supremum of the series representation (5.1). The second part of (iv) follows analogously by using
\begin{equation}\label{eq:boundTSSderiv2}
\left|\frac{\partial^2 f_{TSS}(y;\theta)}{\partial\vartheta^2}\right|\le C_{\theta}\cdot\begin{cases} f_{TSS}(y;\theta)+\big|\frac{\partial f_{S(\alpha,\delta)}(y)}{\partial\alpha}\big|+\big|\frac{\partial^2 f_{S(\alpha,\delta)}(y)}{\partial\alpha^2}\big|,& \vartheta=\alpha,\\ f_{TSS}(y;\theta)+\big|\frac{\partial f_{S(\alpha,\delta)}(y)}{\partial\delta}\big|+\big|\frac{\partial^2 f_{S(\alpha,\delta)}(y)}{\partial\delta^2}\big|,&\vartheta=\delta,\\ f_{TSS}(y;\theta) +yf_{TSS}(y;\theta)+y^2f_{TSS}(y;\theta), &\vartheta=\lambda
\end{cases}
\end{equation}
and that the second moment of the TSS distribution is finite.

In order to show (v), we follow \citet[][Condition 6']{DuMouchel1973} and show the equivalent condition that for every $\theta\in\Theta$ and for every $a=(a_1,a_2,a_3)\in\mathbb{R}^3$ the function
\begin{equation}
g(a,y)=a_1\frac{\partial f_{TSS}(y;\theta)}{\partial\alpha}+a_2\frac{\partial f_{TSS}(y;\theta)}{\partial\delta}+a_3\frac{\partial f_{TSS}(y;\theta)}{\partial\lambda}
\end{equation}
is identically 0 for all $y$ if and only if $a_1=a_2=a_3=0$. It holds 
\begin{equation}\label{eq:intdiffchar}
g(a,y)=\int_{\mathbb{R}}\mathrm{e}^{-\mathrm{i}yt}\left(a_1\phi_1+a_2\phi_2+a_3\phi_3\right)\upd t=\int_{\mathbb{R}}\mathrm{e}^{-\mathrm{i}yt}\phi(a,t)\upd t,
\end{equation} 
where $\phi$ is a linear combination of derivatives of the TSS characteristic function given in \eqref{eq:charTSS}, i.e., the partials are
\begin{align}
\phi_1=\frac{\partial\varphi_{TSS}(t;\theta)}{\partial\alpha}&=\varphi_{TSS}(t;\theta)\delta\Gamma(-\alpha)\\
&\qquad\cdot\left(\left((\lambda-\mathrm{i}t)^{\alpha}\log(\lambda-\mathrm{i}t)-\lambda^{\alpha}\log(\lambda)\right)-\left((\lambda-\mathrm{i}t)^{\alpha}-\lambda^{\alpha}\right)\psi(-\alpha)\right),\\
\phi_2=\frac{\partial\varphi_{TSS}(t;\theta)}{\partial\delta}&=\varphi_{TSS}(t;\theta)\Gamma(-\alpha)\left((\lambda-\mathrm{i}t)^{\alpha}-\lambda^{\alpha}\right),\\
\phi_3=\frac{\partial\varphi_{TSS}(t;\theta)}{\partial\lambda}&=\varphi_{TSS}(t;\theta)\delta\Gamma(-\alpha)\left(\alpha(\lambda-\mathrm{i}t)^{\alpha-1}-\alpha\lambda^{\alpha-1}\right),
\end{align}
where $\psi(x)=\frac{\Gamma^{\prime}(x)}{\Gamma(x)}$ denotes the digamma function. To show that the interchange of integration and differentiation in \eqref{eq:intdiffchar} is permitted we can use inequality (A3) of Lemma A.7 in \cite{Xia2022}, i.e., it exists a constant $C>0$ such that $|\varphi_{TSS}(t;\theta)|\le \mathrm{e}^{-C(|t|^{\alpha}\wedge|t|^2)}$ for the TSS distribution. This implies that both $\int_{\mathbb{R}}|\mathrm{e}^{-\mathrm{i}yt}|\ |\varphi_{TSS}(t;\theta)|\upd t<\infty$ and $\int_{\mathbb{R}}|\mathrm{e}^{-\mathrm{i}yt}|\ |\phi(a,t)|\upd t<\infty$. Then $g(a,y)\equiv0$ iff $\phi(a,t)\equiv0$ because Fourier transforms uniquely determine functions. The characteristic function given in \eqref{eq:charTSS} is non-zero for each $t$ which implies that it is sufficient to study the latter terms of $\phi_1,\phi_2,\phi_3$. $\phi_1$ is a linear combination of $\left((\lambda-\mathrm{i}t)^{\alpha}\log(\lambda-\mathrm{i}t)-\lambda^{\alpha}\log(\lambda)\right)$ and $\left((\lambda-\mathrm{i}t)^{\alpha}-\lambda^{\alpha}\right)$, $\phi_2$ is a multiple of $\left((\lambda-\mathrm{i}t)^{\alpha}-\lambda^{\alpha}\right)$, and $\phi_3$ is a multiple of $\left(\alpha(\lambda-\mathrm{i}t)^{\alpha-1}-\alpha\lambda^{\alpha-1}\right)$. These terms are linearly independent which can be seen, e.g., by checking that the Wronskian determinant is non-zero.
Since $\phi_1$ is the only part of the linear combination with term $(\lambda-\mathrm{i}t)^{\alpha}\log(\lambda-\mathrm{i}t)$ it follows $a_1=0$, otherwise, $\phi$ would not be equal to 0 for each $t$. Because $\phi_2$ is the only remaining part of the linear combination with term $\left((\lambda-\mathrm{i}t)^{\alpha}-\lambda^{\alpha}\right)$ and $\phi_3$ is the only part of the linear combination with term $\left(\alpha(\lambda-\mathrm{i}t)^{\alpha-1}-\alpha\lambda^{\alpha-1}\right)$, it is necessary for $\phi(a,t)=0$ for all $t$ that also $a_2=a_3=0$. 

To show (vi), we recall that \citet[][Condition 3]{DuMouchel1973} proved the statement for the stable distribution. However, he explicitly excluded totally skewed stable distributions if both the stability index and the skewness parameter need to be estimated. We adapt his proof and show the statement for fixed and known total positive skewness, i.e., the stable subordinator.
By \eqref{eq:dTSS},
\begin{equation}
\label{eq:logdTSS}
\frac{\partial^2}{\partial\theta\partial\theta^{\prime}}\log f_{TSS}(y;\theta)=\frac{\partial^2}{\partial\theta\partial\theta^{\prime}}\left(-\lambda y-\lambda^{\alpha}\delta\Gamma(-\alpha)\right)+\frac{\partial^2}{\partial\theta\partial\theta^{\prime}}\log f_{S(\alpha,\delta)}(y),
\end{equation}
the statement for the TSS distribution then follows immediately.
Following \citet[][Proof of Condition 3]{DuMouchel1973}, the absolute value of the matrix-elements of $\frac{\partial^2\log f_{S(\alpha,\delta)}(y)}{\partial\theta^2}$ have a maximum $C(y)$, for fixed $y$ and $\theta$ in the compact set $\Theta$. Therefore, it is only necessary to study the behavior of $C(y)$ at its limit points. \cite{DuMouchel1973} showed that for $y\to\infty$ the corresponding $C(y)$ for the stable distribution is of the order $O(\log^2|y|)$ by deriving the series representation given in (5.1) of \cite{DuMouchel1973} and computing the order of the bound. The result for $y\to\infty$ transfers to our situation.
Since the totally skewed subordinator has no left tail we have to study the behavior of $C(y)$ for $y\to0$. However, the series representation given in \eqref{eq:dTSSseries} does not work for $y\to0$ (with or without the tempering term). There is another series representation which holds for $1<\alpha<2$ given by
\begin{equation}
\label{eq:dTSSseries2}
f_{S(\alpha,\delta)}(y)=\frac{-1}{\pi}\sum_{k=1}^{\infty}\frac{(-1)^k}{k!}\Gamma(1+k/\alpha)\Gamma(1-\alpha)^{-k/\alpha}\left(\frac{\delta}{\alpha}\right)^{-k/\alpha}y^{k-1}\sin(\pi k),
\end{equation}
see \cite{Bergstrom1952,Nolan2020}, adapted to our parametrization, for $y\to0$. For $\alpha<1$, the series is divergent but the partial sum of the first $n$ terms is an asymptotic expansion for all $n$ such that the remainder is of order $O(y^n)$ for $y\to0$. For $1<\alpha<2$, the series \eqref{eq:dTSSseries2} is absolutely convergent which allows changing sum and differentiation with respect to $y$ and the parameters. The coefficients for the series representation for the derivatives are cumbersome and thus omitted. We now use the same argument as \cite[][Section 5]{DuMouchel1973} that coefficients for asymptotic expansions are unique which implies that the derivatives have the same coefficients for $\alpha<1$ as for $\alpha>1$. The derivatives with respect to $\theta$ are lengthy but not complicated, in particular, they do not interfere with $y$. Therefore, $C(y)=O(1)$ for $y\to0$. All in all, this implies that $\int_0^{\infty} C(y)f_{S(\alpha,\delta)}(y)\upd y<\infty$ and hence $\int_0^{\infty} C(y)f_{TSS}(y;\theta)\upd y<\infty$ for each $\theta\in\Theta$.



\item[(b)] Conditions (i) and (ii) work just as in (a). For (iii), the differentiability follows by \eqref{eq:convolutionCTS2} and \eqref{eq:convolutionCTS3} below and Theorem 28.4 in \cite{Sato1999levy} that gives differentiability in $x$ which is needed to guarantee differentiability in $\mu$. Moreover, Theorems 24.10 and 53.1 in \cite{Sato1999levy} imply the density $f_{CTS}(x;\theta)$ is unimodal and strictly positive. 

For (iv), we use that by \eqref{eq:convolutionCTS}
\begin{equation}
\label{eq:convolutionCTS2}
f_{CTS}(x+\mu;\theta)=\int_{-\infty}^{\infty}f_+(x+y)f_-(y)\upd y,
\end{equation}
where $f_+$ is the density of $Y_+\sim TS^{\prime}(\alpha,\delta_+,\lambda_+)$ and $f_-$ is the density of $Y_-\sim TS^{\prime}(\alpha,\delta_-,\lambda_-)$. We remark that for $\alpha\in(0,1)$ the lower limit of the integral is technically not $-\infty$ but the minimum value on which both $f_+$ and $f_-$ are non-zero. This value depends on $\theta$ and is the reason why MLE asymptotics do not hold for the TS' distribution if $\alpha\in(0,1)$. Then, the functions $f_+(y)$ and $f_-(y)$ are identical zero for all $y$ smaller than $\Gamma(1-\alpha)\delta_+\lambda_+^{\alpha-1}$ and $\Gamma(1-\alpha)\delta_-\lambda_-^{\alpha-1}$, respectively. Thus the right tail has a different behavior than the left if $\alpha\in(0,1)$. If $\alpha\in[1,2)$, $f_+$ and $f_-$ are strictly positive on $\mathbb{R}$ and we can use the same bounds for both left and right tails. We will make repeated use of this case distinction throughout part (b) of the proof. Although Assumption \ref{ass:ML}.(iii) does not hold for the TS' density if $\alpha\in(0,1)$, it is possible to derive Assumption \ref{ass:ML}.(iv)  for $\alpha\in(0,2)$ analogously to part (a) for the TSS density by replacing relation \eqref{eq:dTSS} with \eqref{eq:denscomposition2}. This and the fact that $\sup_{\theta\in\Theta}\Big|\Big|\frac{\partial f_{\pm}(x)}{\partial \theta}\Big|\Big|$ and $\sup_{\theta\in\Theta}\Big|\Big|\frac{\partial^2 f_{\pm}(x)}{\partial \theta\partial\theta^{\prime}}\Big|\Big|$ can be bounded in $x$ ensures that we can change the order of differentiation and integration in \eqref{eq:convolutionCTS2} (the product of two integrable functions is integrable if at least one function is bounded). Thus,
\begin{align}\label{eq:convolutionCTS3}
\frac{\partial f_{CTS}(x;\theta)}{\partial\theta}&=\frac{\partial}{\partial\theta}\int_{-\infty}^{\infty}f_+(x+y-\mu)f_-(y)\upd y\\
&=\int_{-\infty}^{\infty}\frac{\partial}{\partial\theta}\left(f_+(x+y-\mu)f_-(y)\right)\upd y\\
&=\int_{-\infty}^{\infty}\left(\frac{\partial f_+(x+y-\mu)}{\partial\theta}f_-(y)+\frac{\partial f_-(y)}{\partial\theta}f_+(x+y-\mu)\right)\upd y.
\end{align}
In the second line we used that we can change the order of integration and differentiation. Hence,
\begin{align}
&\ \int_{-\infty}^{\infty}\sup_{\theta\in\Theta}\bigg|\bigg|\frac{\partial f_{CTS}(x;\theta)}{\partial \theta}\bigg|\bigg|\upd x\\
\le&\ \int_{-\infty}^{\infty}\int_{-\infty}^{\infty}\sup_{\theta\in\Theta}\bigg|\bigg|\frac{\partial f_+(x+y-\mu)}{\partial\theta}f_-(y)\bigg|\bigg|\upd y\upd x
+ \int_{-\infty}^{\infty}\int_{-\infty}^{\infty}\sup_{\theta\in\Theta}\bigg|\bigg|\frac{\partial f_-(y)}{\partial\theta}f_+(x+y-\mu)\bigg|\bigg|\upd y\upd x\\
=&\ \int_{-\infty}^{\infty}\int_{-\infty}^{\infty}\sup_{\theta\in\Theta}\bigg|\bigg|\frac{\partial f_+(x+y-\mu)}{\partial\theta}f_-(y)\bigg|\bigg|\upd x\upd y
+ \int_{-\infty}^{\infty}\int_{-\infty}^{\infty}\sup_{\theta\in\Theta}\bigg|\bigg|\frac{\partial f_-(y)}{\partial\theta}f_+(x+y-\mu)\bigg|\bigg|\upd x\upd y\\
\le & \ \int_{-\infty}^{\infty}\sup_{\theta\in\Theta}|f_-(y)|\int_{-\infty}^{\infty}\sup_{\theta\in\Theta}\bigg|\bigg|\frac{\partial f_+(x+y-\mu)}{\partial\theta}\bigg|\bigg|\upd x\upd y \\
&\qquad+ \int_{-\infty}^{\infty}\sup_{\theta\in\Theta}\bigg|\bigg|\frac{\partial f_-(y)}{\partial\theta}\bigg|\bigg|\int_{-\infty}^{\infty}\sup_{\theta\in\Theta}\left|f_+(x+y-\mu)\right|\upd x\upd y\label{eq:innerintegral}\\
<& \ \infty,
\end{align}
where we used Fubini's Theorem in the second step. To see the finiteness \eqref{eq:innerintegral} we will in the following find bounds for the two series representations/asymptotic expansions \eqref{eq:dTSSseries} and \eqref{eq:dTSSseries2} that are independent of $\theta$ and show that they are integrable. We need to discuss the several integrals separately. 

For the inner integral of the left term in \eqref{eq:innerintegral}, we use the asymptotic expansions for the integrand as in part (a) and the relation \eqref{eq:denscomposition2}. 
\begin{itemize}
\item Its right tail (without the sup) can be bounded by $C_{\theta}\log(x+y)(x+y)^{-\alpha-1}$, where $C_{\theta}$ is a constant only depending on $\theta$. This follows by equation (5.1) of \cite{DuMouchel1973} and by differentiating \eqref{eq:denscomposition2}. (Note that the exponential function of the tempering part in \eqref{eq:denscomposition2} can be bounded by $C_{\theta}$.) The bound $C_{\theta}\log(x+y)(x+y)^{-\alpha-1}$ attains its supremum for $\alpha=\varepsilon$ in the right tail (recalling that $\alpha\in[\varepsilon,2-\varepsilon]$).
\item For $\alpha\in(0,1)$ the left tail is bounded by $C_{\theta}(x+y)$ which follows by bounding the derivative of \eqref{eq:dTSSseries2} (which exact form we omit, see the discussion below equation \eqref{eq:dTSSseries2}). Thus for $\alpha\in(0,1)$, the inner integral exists and is bounded by $C(y^2+\log(y)/y^{\varepsilon})$, $x$-integral of the sum of both bounds, where $C$ is a constant. For $\alpha\in[1,2)$, the left tail has the same bound as the right tail (because the (5.1) of \cite{DuMouchel1973} holds for both tails) and hence the inner integral is bounded by $C\log(y)/y^{\varepsilon}$ (by integrating over $x$). 
\end{itemize}
In both cases, the bound is integrable with respect to the TS' density and hence the left double integral is finite. Next, we turn to the right inner integral.
\begin{itemize}
\item We see that the function is bounded on the right and left (if $\alpha\in[1,2)$) tail by $C_{\theta}(x+y)^{-\alpha-1}$ via the expansion \eqref{eq:dTSSseries} and \eqref{eq:denscomposition2} and for the left tail (if $\alpha\in(0,1)$) by a constant $C_{\theta}$ via \eqref{eq:dTSSseries2} and \eqref{eq:denscomposition2}. Thus, the inner integral is bounded by $C(y+1/y^{\varepsilon})$ if $\alpha\in(0,1)$ and by $C/y^{\varepsilon}$ if $\alpha\in[1,2)$.
\item For the right outer integral, we again have to consider the parameter derivatives of \eqref{eq:dTSSseries} (right tail and left tail if $\alpha\in[1,2)$) and of \eqref{eq:dTSSseries2} (left tail if $\alpha\in(0,1)$). All in all, this can be bounded by $C(y^{-2\varepsilon-1}\log(y))$ (right tail and left tail if $\alpha\in[1,2)$) and by $C$ (left tail if $\alpha\in(0,1)$).
\end{itemize}
Therefore, both double integrals are finite. The second part of (iv) follows analogously by first interchanging the order of integration and second derivative and then analyzing the behavior of the four resulting double integrals using our asymptotic expansions.

(v) follows analogously to part (a), in particular, if we denote $(\phi_1,\ldots,\phi_6)^{\prime}=\frac{\partial\varphi_{CTS}(t;\theta)}{\partial\theta}$ we observe that $\phi_1$ is the only linear combination with terms $ (\lambda_+-\mathrm{i}t)^{\alpha}\log(\lambda_+-\mathrm{i}t)$ and $ (\lambda_-+\mathrm{i}t)^{\alpha}\log(\lambda_-+\mathrm{i}t)$, which implies $a_1=0$. $\phi_2$ is a linear combination of $t$ and $(\lambda_+-\mathrm{i}t)^{\alpha}$, $\phi_3$ is a linear combination of $t$ and $(\lambda_-+\mathrm{i}t)^{\alpha}$, $\phi_4$ is a linear combination of $t$ and $(\lambda_+-\mathrm{i}t)^{\alpha-1}$, $\phi_5$ is a linear combination of $t$ and $(\lambda_-+\mathrm{i}t)^{\alpha-1}$, and $\phi_6$ is a multiple of $t$. This implies $a_2=a_3=a_4=a_5=0$ and hence $a_6=0$.

For (vi), we have to study the limiting behavior of $C(x)$ which is the function which bounds each of the elements of $\frac{\partial^2}{\partial\theta\partial\theta^{\prime}}\log f_{CTS}(x;\theta)$ in absolute value. We make use of the bound 
\begin{equation}
\label{eq:secondlogderiv}
\frac{\partial^2}{\partial\theta\partial\theta^{\prime}}\log f_{CTS}(x;\theta)\le\frac{\frac{\partial^2}{\partial\theta\partial\theta^{\prime}}f_{CTS}(x;\theta)}{f_{CTS}(x;\theta)}.
\end{equation}
We recall the convolution property \eqref{eq:convolutionCTS2}, and that we are allowed to interchange integration and differentiation we therefore need to study 
\begin{equation}
\label{eq:beforeproductrule}
\frac{\partial^2}{\partial\theta\partial\theta^{\prime}}f_+(x+y-\mu)f_-(y)
\end{equation}
for that we can apply the product rule for differentiation. We have to use the same trick as above to make a case distinction for $\alpha<1$ and for $\alpha\ge1$ to analyze the behavior of the right and of the left tail separately. The right tail and the left tail of \eqref{eq:beforeproductrule} for $\alpha\ge1$ can be analyzed with the asymptotic expansion \eqref{eq:dTSSseries} analogously as above and its elements can be bounded by $C_{\theta}(x+y)^{-1-\alpha}\log(x+y)^2$. The left tail for $\alpha\in(0,1)$ can be bounded by $C_{\theta}$. Therefore, the $\mathrm{d}y$-integral over \eqref{eq:beforeproductrule} can be bounded by $C_{\theta}x^{-\alpha}\log(x)$. Given that series \eqref{eq:dTSSseries} is alternating, we can also find a lower bound for the denominator in \eqref{eq:secondlogderiv} such that \eqref{eq:secondlogderiv} can be bounded by $C_{\theta}\log(x)^2$. Since $\Theta$ is compact, the expectation in (vi) is finite which completes the proof for (b).

\item[(c)] (i) and (ii) as above. For the remainder we use the subordination property of the NTS distribution, i.e.,
\begin{equation}
\label{eq:dNTS}
f_{NTS}(z;\theta)=\int_0^{\infty}f_N(z;\mu+\beta y,y)f_{TSS}(y;\alpha,\delta,\lambda)\upd y,
\end{equation}
where $f_N(z;m,s^2)$ denotes the density of the normal distribution with mean $m$ and variance $s^2$.
Similarly to \cite{Massing2019a}, we can show that $\left|\frac{\partial f_N(z;\mu+\beta y,y)}{\partial \mu}\right|\le\frac{C}{y}$ and $\left|\frac{\partial f_N(z;\mu+\beta y,y)}{\partial \beta}\right|\le C$ for all $\theta\in\Theta$. Additionally, because $|f_N(z;\mu+\beta y,y)|\le\frac{C}{\sqrt{y}}$, we have for $\vartheta\in(\alpha,\delta,\lambda)$ that $\left|\frac{\partial f_N(z;\mu+\beta y,y)f_{TSS}(y;\alpha,\delta,\lambda)}{\partial \vartheta}\right|\le\frac{C}{\sqrt{y}}\left|\frac{\partial f_{TSS}(y;\alpha,\delta,\lambda)}{\partial \vartheta}\right|$.
Thus,
\begin{equation}
\frac{\partial f_{NTS}(z;\theta)}{\partial\theta}=\int_0^{\infty}\frac{\partial}{\partial\theta}(f_N(z;\mu+\beta y,y)f_{TSS}(y;\alpha,\delta,\lambda))\upd y
\end{equation}
holds by property (iv) for the TSS distribution. This implies (iii).

(iv) follows analogously as in (b) because
\begin{align}
&\ \int_{-\infty}^{\infty}\sup_{\theta\in\Theta}\bigg|\bigg|\frac{\partial f_{NTS}(z;\theta)}{\partial \theta}\bigg|\bigg|\upd z\\
\le&\ \int_{-\infty}^{\infty}\int_0^{\infty}\sup_{\theta\in\Theta}\bigg|\bigg|\frac{\partial}{\partial\theta}\left(f_N(z;\mu+\beta y,y)f_{TSS}(y;\alpha,\delta,\lambda)\right)\bigg|\bigg|\upd y\upd z\\
=&\ \int_0^{\infty}\int_{-\infty}^{\infty}\sup_{\theta\in\Theta}\bigg|\bigg|\frac{\partial}{\partial\theta}\left(f_N(z;\mu+\beta y,y)f_{TSS}(y;\alpha,\delta,\lambda)\right)\bigg|\bigg|\upd z\upd y.\label{eq:boundNTSproof}
\end{align}
In order to bound \eqref{eq:boundNTSproof}, we discuss the partial derivatives separately. For $\frac{\partial}{\partial\vartheta}$, $\vartheta\in(\alpha,\delta,\lambda)$,
\begin{align}
&\ \int_0^{\infty}\int_{-\infty}^{\infty}\sup_{\theta\in\Theta}\bigg|\frac{\partial}{\partial\vartheta}\left(f_N(z;\mu+\beta y,y)f_{TSS}(y;\alpha,\delta,\lambda)\right)\bigg|\upd z\upd y\\
\le&\ \int_0^{\infty}\sup_{\theta\in\Theta}\bigg|\frac{\partial}{\partial\vartheta}f_{TSS}(y;\alpha,\delta,\lambda)\bigg|\int_{-\infty}^{\infty}\sup_{\theta\in\Theta}|f_N(z;\mu+\beta y,y)|\upd z\upd y\\
\le& \ \int_0^{\infty}\frac{C}{\sqrt{y}}\sup_{\theta\in\Theta}\bigg|\frac{\partial}{\partial\vartheta}f_{TSS}(y;\alpha,\delta,\lambda)\bigg|\upd y\\
<&\ \infty.
\end{align}
The third line follows because $\Theta$ is a compact set such that the integrands can be bounded away from zero and $\infty$. We can decompose the inner integral
$\int_{-\infty}^{\infty}\sup_{\theta\in\Theta}|f_N(z;\mu+\beta y,y)|\upd z=\int_{z_*}^{z^*}\sup_{\theta\in\Theta}|f_N(z;\mu+\beta y,y)|\upd z+\int_{\mathbb{R}\backslash[z_*,z^*]}\sup_{\theta\in\Theta}|f_N(z;\mu+\beta y,y)|\upd z$.
The first integral attains the maximum value $\frac{C}{\sqrt{y}}$. The second integral is bounded by one because we integrate the normal density with the same value $\theta^*\in\Theta$ which makes the exponent in the exponential function maximal for all $z\in\mathbb{R}\backslash[z_*,z^*]$. The finiteness in the fourth line thus follows analogously to part (a) condition (iv) (using that we can find a bound independent of the parameters for the series representations that is integrable). For $\frac{\partial}{\partial\mu}$, \eqref{eq:boundNTSproof} is bounded by
\begin{align}
&\ \int_0^{\infty}\sup_{\theta\in\Theta}|f_{TSS}(y;\alpha,\delta,\lambda)|\int_{-\infty}^{\infty}\sup_{\theta\in\Theta}\bigg|\frac{\partial}{\partial\mu}f_N(z;\mu+\beta y,y)\bigg|\upd z\upd y\\
\le& \ C\int_0^{\infty}\frac{1}{\sqrt{y}}\sup_{\theta\in\Theta}|f_{TSS}(y;\alpha,\delta,\lambda)|\upd y\\
<&\ \infty.
\end{align}
The second line follows because the inner integral is easy to analyze (derive w.r.t.~$\mu$, take the sup on a compact set and integrate w.r.t.~$z$) and bounded by $\frac{C}{\sqrt{y}}$. Thus, we obtain that the double integral is finite which follows by the series representation \eqref{eq:dTSSseries} for the right tail and the asymptotic expansion \eqref{eq:dTSSseries2} near the origin. 
For $\frac{\partial}{\partial\beta}$,
\begin{align}
&\ \int_0^{\infty}\sup_{\theta\in\Theta}|f_{TSS}(y;\alpha,\delta,\lambda)|\int_{-\infty}^{\infty}\sup_{\theta\in\Theta}\bigg|\frac{\partial}{\partial\beta}f_N(z;\mu+\beta y,y)\bigg|\upd z\upd y\\
\le& \ C\int_0^{\infty}\sqrt{y}\sup_{\theta\in\Theta}|f_{TSS}(y;\alpha,\delta,\lambda)|\upd y\\
<&\ \infty.
\end{align}
The second part of (iv) follows analogously. (v) follows in a similar fashion as in (a) and (b).

As in (a)\&(b), it is for (vi) enough to consider the behavior of $C(z)$ for $z\to\pm\infty$, where $C(z)$ is the function which bounds the elements of $\frac{\partial^2}{\partial\theta\partial\theta^{\prime}}\log f_{NTS}(z;\theta)$ in absolute value. With the same arguments as above it holds that
\begin{equation}\label{eq:cvi}
\frac{\partial^2 f_{NTS}(z;\theta)}{\partial\theta\partial\theta'}=\int_0^{\infty}\frac{\partial^2}{\partial\theta\partial\theta'}\left(f_N(z;\mu+\beta y,y)\mathrm{e}^{-\lambda y-\lambda^{\alpha}\delta\Gamma(-\alpha)}f_{S}(y;\alpha,\delta)\right)\upd y.
\end{equation}
We use that by the series representation for the parameter derivatives of \cite{DuMouchel1973} the second derivatives of the stable distribution are bounded by $C_{\theta}y^{-\alpha-1}\log(y)^2$ for large $y$. Using this bound in \eqref{eq:cvi} and integrating w.r.t.~$y$ yields that $C(z)=O(z^{-1-2\varepsilon}\log(|z|)^2)$, where $\varepsilon$ is the lower boundary of admissible values for $\alpha\in[\varepsilon,1-\varepsilon]$ (i.e., the value of $\alpha$ that maximizes the bound). This implies (vi).
\end{enumerate}
\end{proof}

\begin{proof}[Proof of Theorem \ref{thm:CGMM}]
We check the conditions of Assumption \ref{ass:CGMM} for the tempered stable subordinator. The other two cases follow analogously. (i) $\Theta$ is compact by construction and $\pi$ is chosen in such a way that it fulfills (ii). (iii) holds if 
\begin{equation}\label{eq:proof:thm:CGMM}
\exp\left(\delta\Gamma(-\alpha)\left((\lambda-\mathrm{i}t)^{\alpha}-\lambda^{\alpha}\right)\right)=\exp\left(\delta_0\Gamma(-\alpha_0)\left((\lambda_0-\mathrm{i}t)^{\alpha_0}-\lambda_0^{\alpha_0}\right)\right)
\end{equation}
holds for each $t$. Recall that the \Levy triplet by the L\'evy-Khtinchine representation uniquely determines the distribution. Thus, \eqref{eq:proof:thm:CGMM} is equivalent to 
\begin{equation}\label{eq:proof:thm:CGMM2}
\frac{\mathrm{e}^{-\lambda r}\delta}{r^{1+\alpha}}\mathds{1}_{(0,\infty)}(r)=\frac{\mathrm{e}^{-\lambda_0 r}\delta_0}{r^{1+\alpha_0}}\mathds{1}_{(0,\infty)}(r)
\end{equation}
for each $r$. By taking the logarithm, \eqref{eq:proof:thm:CGMM2} is equivalent to the linear independence of $1$, $\log r$ and $r$, which is true.
(iv)--(vi) have been shown in the proof of Theorem \ref{thm:ML}.
\end{proof}

\begin{proof}[Proof of Theorem \ref{thm:GMC}]
\begin{enumerate}
\item[(a)]
The crucial assumption of \citet[][Theorem 2.6]{Newey1994} for consistency of a GMM estimator is that $W\mathbb{E}_{\theta_0}[g(X_j;\theta)]=0$ only if $\theta=\theta_0$, where $W=\Omega^{-1}$ if it is invertible. \citet[][Lemma 6.1]{Kuchler2013} showed that cumulant matching for the CTS distribution holds locally by the local inverse function theorem. In their Lemma 5.4, they explicitly solved the cumulant matching for the parameters for the TSS distribution. We here follow the standard GMM theory by verifying that $G=\left(g_{mi}\right)_{1\le m\le p,1\le i\le3}=\left.\mathbb{E}_{\theta_0}\left[\frac{\partial g(X;\theta)}{\partial\theta}\right]\right|_{\theta=\theta_0}$ is of full column rank (in this case 3 which is the number of parameters). This implies local identification, see \citet[][Theorem 2]{Rothenberg1971}. Recall that function $g$ is given in \eqref{eq:functiong}, that first moments are equal to the first cumulants, and that the cumulants for the TSS are given in \eqref{eq:cumsTSS}. To show that $G$ has full rank, it is enough to consider the first three row of $G$. This is because if the matrix with the first three row has full rank then this is the maximal number of linearly independent rows for the restricted matrix as well as of G (because the column rank can be at most 3). The entries of the first three rows are
\begin{align}\label{eq:firstrowG}
g_{11}&=\delta  \lambda ^{\alpha -1} \Gamma (1-\alpha ) \log (\lambda )-\delta  \lambda ^{\alpha -1} \Gamma (1-\alpha ) \psi (1-\alpha )\\
g_{12}&=\lambda ^{\alpha -1} \Gamma (1-\alpha )\\
g_{13}&= (\alpha -1) \delta  \lambda ^{\alpha -2} \Gamma (1-\alpha ) \\
g_{21}&= 2 \delta ^2 \lambda ^{2 \alpha -2} \Gamma (1-\alpha )^2 \log (\lambda )-2 \delta ^2 \lambda ^{2 \alpha -2} \Gamma (1-\alpha )^2 \psi (1-\alpha )+\delta  \lambda ^{\alpha -2}
   \Gamma (2-\alpha ) \log (\lambda )\\
	&\quad-\delta  \lambda ^{\alpha -2} \Gamma (2-\alpha ) \psi (2-\alpha )\\
g_{22}&= 2 \delta  \lambda ^{2 \alpha -2} \Gamma (1-\alpha )^2+\lambda ^{\alpha -2}
   \Gamma (2-\alpha )\\
g_{23}&=(2 \alpha -2) \delta ^2 \lambda ^{2 \alpha -3} \Gamma (1-\alpha )^2+(\alpha -2) \delta  \lambda ^{\alpha -3} \Gamma (2-\alpha )\\
g_{31}&=3 \delta ^3 \lambda ^{3 \alpha -3} \Gamma (1-\alpha )^3 \log (\lambda )-3 \delta ^3 \lambda ^{3 \alpha -3} \Gamma (1-\alpha )^3 \psi (1-\alpha )+6 \delta ^2 \lambda ^{2 \alpha -3}
   \Gamma (1-\alpha ) \Gamma (2-\alpha ) \log (\lambda )\\
	&\quad -3 \delta ^2 \lambda ^{2 \alpha -3} \Gamma (1-\alpha ) \Gamma (2-\alpha ) \psi (1-\alpha )-3 \delta ^2 \lambda ^{2 \alpha
   -3} \Gamma (1-\alpha ) \Gamma (2-\alpha ) \psi (2-\alpha )\\
	&\quad+\delta  \lambda ^{\alpha -3} \Gamma (3-\alpha ) \log (\lambda )-\delta  \lambda ^{\alpha -3} \Gamma (3-\alpha ) \psi
   (3-\alpha )\\
g_{32}&=3 \delta ^2 \lambda ^{3 \alpha -3} \Gamma (1-\alpha )^3+6 \delta  \lambda ^{2 \alpha -3} \Gamma (1-\alpha ) \Gamma (2-\alpha )+\lambda ^{\alpha -3} \Gamma
   (3-\alpha )\\
g_{33}&=(3 \alpha -3) \delta ^3 \lambda ^{3 \alpha -4} \Gamma (1-\alpha )^3+3 (2 \alpha -3) \delta ^2 \lambda ^{2 \alpha -4} \Gamma (1-\alpha ) \Gamma (2-\alpha )+(\alpha -3)
   \delta  \lambda ^{\alpha -4} \Gamma (3-\alpha ).
\end{align}
The determinant of this $3\times3$ matrix is given by
\begin{equation}
\delta ^2 \lambda ^{3 \alpha -7} \Gamma (1-\alpha ) \Gamma (2-\alpha ) \Gamma (3-\alpha ) (\psi ^{(0)}(1-\alpha )-2 \psi ^{(0)}(2-\alpha )+\psi ^{(0)}(3-\alpha )).
\end{equation}
A three-dimensional plot with Mathematica shows that the determinant is never zero for $\alpha\in(0,1)$ and $\lambda\in(0,\infty)$. (The factor $\delta^2$ is always positive.) Hence the rank of $G$ is full. This implies local identification of the GMC.


\item[(b)] The proof works analogously.

\end{enumerate}
\end{proof}

\end{document}